\newtheorem{lemma}{Lemma}[section]
\newtheorem{corollary}[lemma]{Corollary}
\newtheorem{definition}[lemma]{Definition}
\newtheorem{proposition}[lemma]{Proposition}
\newtheorem{remark}[lemma]{Remark}
\newtheorem{theorem}[lemma]{Theorem}
\newcommand{\field}[1]{\mathbb{#1}}
\newcommand{\N}{\field{N}}
\newcommand{\R}{\field{R}}
\newcommand{\Sr}{\field{S}}
\newcommand{\Rn}{\R^n}
\newcommand{\B}{\field{B}}
\newcommand{\Cal}[1]{\mathcal{#1}}
\newcommand{\Scr}[1]{\mathscr{#1}}
\newcommand{\nf}{\nabla f}
\newcommand{\nuf}{\nu_f}
\newcommand{\ud}{{\rm{d}}}
\newcommand{\dS}{\ud_\Sr}
\newcommand{\vol}{{\rm{vol}}}
\newcommand{\clos}{{\rm{\bf clos}}}
\newcommand{\crit}{{\rm{\bf crit}}}
\newcommand{\hes}{{\rm Hess}}
\newcommand{\jac}{{\rm{\bf Jac}}}
\newcommand{\fl}{\rightarrow}
\newcommand{\bs}{ {\tiny $\blacksquare$} \\}
\numberwithin{equation}{section}
\newenvironment{myproof}{\noindent{\it Proof.}
\setlength{\parindent}{0mm}} {$\hfill$ \bs}
\begin{document}
\title[On the total curvatures of a tame function]{On the total curvatures of a tame function}

\author[V. Grandjean]{Vincent Grandjean}

\address{{\it Permanent Address:} V. Grandjean, Department of Computer Science,
University of Bath, BATH BA2 7AY, England,(United Kingdom)}

\address{{\it Current Address:} V. Grandjean, Fakul\"at V, Institut f\"ur Mathematik Carl
von Ossietzky Universit\"at, Oldenburg, 26111 Oldenburg i.O. (Germany)}

\email{cssvg@bath.ac.uk}

\thanks{Partially supported by the European research network IHP-RAAG
contract number HPRN-CT-2001-00271}

\subjclass[2000]{Primary 03C64, 53B20 ; Secondary 49Q15, 57R35}

\keywords{o-minimal structure, Gauss Curvature, Hausdorff Limits, Morse
index} 
\begin{abstract} 
Given a definable function $f:\Rn \mapsto \R$, enough differentiable, we study the continuity of the total
curvature function $t \to K(t)$, total curvature of the level $f^{-1} (t)$, and the total absolute
curvature function $t \to |K| (t)$,  total absolute curvature of the level $f^{-1} (t)$. We show they
admits at most finitely many discontinuities.  
\end{abstract}
\maketitle

\section{Introduction}
One of the nicest feature of o-minimal structures expanding the ordered field of real numbers is
that taking the derivative of a definable function (in a given such o-minimal structure) provides
a definable function (in the same given o-minimal structure). Unfortunately the inverse operation
is somehow much more delicate, integration and measure lead to problems even in some of their simplest
aspects.

In the setting of subanalytic geometry some measure theoretical aspects, density, Lipschitz-killing
curvatures, of the subanalytic sets had been studied \cite{BB, Com, Fu}. Still in this context,
the $k$-dimensional volume of a global subanalytic subset of $\Rn$, lying in a globally subanalytic
family of subsets of $\Rn$ of dimension at most $k$, is, when finite, a log-analytic function in the
parameter, as proved in \cite{LR,CLR}. This is already an issue since the logarithmic contribution
cannot be avoided and so the functions carrying the quantitative aspect of the variation of the volume
in the parameter of the family are already outside of the structure.

In the world of non-polynomially bounded o-minimal structures expanding the real
numbers, almost nothing similar to the statement in the globally subanalytic context is known in whole
generality. Let us nevertheless mention the results of  \cite{Le,Ka}, proved independently, namely the
definability of the set of the parameters at which the $2$-dimensional volume of a definable family
of plane definable subsets is finite.

\medskip
Let us go closer to the goal of this note. Given a definable function $f : \Rn \mapsto \R$, that is $C^l$
with $l \geqslant 2$, to each regular level $t$ we associate two real numbers, namely, $K(t)$, the total
curvature of the level $f^{-1} (t)$ oriented by the unitary gradient field of $f$, and, $|K|(t)$, the total
absolute curvature of the level $f^{-1} (t)$. When $n$ is odd and $f^{-1} (t)$ is compact and connected,
the Gauss-Bonnet-Chern Theorem states that $K(t)$ is just the Euler Characteristic of $f^{-1} (t)$ (modulo
a constant depending only on $n$). Somehow $K(t)$ and $|K| (t)$ have a connection with the topological
equisingularity type of $f^{-1} (t)$, even when  $f^{-1} (t)$ is no longer compact. Thus knowing the variation
as a function of $t$ of these total curvatures functions could give some information about the
equisingularity of the family of the levels. This was the first motivation of this study 
(see \cite{Gr} for some results in this direction).

From the measure theoretical point of view, these total curvatures are just weighted $n-1$-dimensional
volumes of a definable $1$-parameter family of subsets of $\Sr^{n-1}$. So how do the total curvature
functions in the parameter behave ? As already said, for the level of generality we want to deal with,
there is no hope (yet!) to provide some quantitative information about the variation of these functions
of $t$. Nevertheless we propose in this note to study some qualitative properties of the functions
$t \to K(t)$ and $t \to |K| (t)$, and we will find some.
We actually proved

\medskip
\noindent
{\bf Theorem.} {\em  Let $f : \Rn \mapsto \R$ be a $C^l$, $l\geqslant 2$, definable function. \\
(1) the function $t \to |K|(t)$ admits at most finitely many discontinuities. \\
(2) If the function $t \to |K|(t)$ is continuous at a regular value $c$ of $f$, so is
the function $t \to K(t)$.
}

\medskip
The paper is organized as follows.

In Section \ref{sectionNC} we provide some definitions, conventions and notations that will be used in the rest
of the note.

Section \ref{sectionCACH} recalls what the total curvature and the total absolute curvature
of a hypersurface are and what are the connections with linear orthogonal projections onto
oriented lines.

In Section \ref{sectionGMTF} we just define the Gauss map of a given function and states some of its
elementary properties in the frame established in Section \ref{sectionCACH}.

Section \ref{sectionHLGI} is devoted to Hausdorff limits of Gauss images since they will be the key
tool of our main result.

 Theorem \ref{thmCCF1} and Proposition \ref{propCCF4} are the main results of Section \ref{sectionCCF},
and  are proved with the help of some preliminary work.

 In Section \ref{sectionTLCTALC} we state results of the same flavor as those of Section \ref{sectionCCF},
but for what we named the total $\lambda$-curvature and total absolute $\lambda$-curvature.

We finish this paper with some remarks in Section \ref{sectionRCS}.
%
%
%
%
%
%
%
%
%
%
%
%
%
%
\section{Notation - convention}\label{sectionNC}

Let $\Rn$ be the real $n$-dimensional affine space endowed with its Euclidean metric.  The scalar product
will be denoted by $\langle \cdot , \cdot \rangle$.

Let $\B_R^n$ be the open ball of $\R^n$ centered at the origin and of radius $R >0$.

Let $\Sr_R^{n-1}$ be the $(n-1)$-sphere centered at the origin and of radius $R>0$.

Let $\Sr^{n-1}$ be unit ball of $\R^n$ endowed  with the induced Euclidean metric and let $\dS$ be the
intrinsic distance function on $\Sr^{n-1}$.

Let $\ud v_k$ be the $k$-dimensional Hausdorff measure of $\R^n$ with $k \in \{1,\ldots,n\}$.

\medskip
Let us recall briefly what an o-minimal structure is.

\medskip
An {\it o-minimal structure $\Cal{M}$ expanding the ordered field of real numbers} is a collection
$(\Cal{M}_p)_{p \in \N}$, where $\Cal{M}_p$ is a set of subsets of $\R^p$ satisfying the following axioms

\smallskip
\noindent
1) For each $p\in \N$, $\Cal{M}_p$ is a boolean sub-algebra of subsets of $\R^p$. \\
2) If $A \in \Cal{M}_p$ and $B \in \Cal{M}_q$, then $A \times B \in \Cal{M}_{p+q}$. \\
3) If $\pi: \R^{p+1} \mapsto \R^p$, is the projection on the first $p$ factors, given any $A \in
\Cal{M}_{p+1}$, $\pi (A) \in \Cal{M}_p$.  \\
4) The algebraic subsets of $\R^p$ belongs to $\Cal{M}_p$. \\
5) $\Cal{M}_1$ consists exactly of the finite unions of points and intervals.

So the smallest o-minimal structure is the structure of the semi-algebraic subsets.

\medskip
Assume that such an o-minimal structure $\Cal{M}$ is given for the rest of this article.

\medskip
A subset $A$ of $\R^p$ is a {\it definable subset} (in the given o-minimal structure) of $\R^p$, if $A \in
\Cal{M}_p$.

A mapping $g: X \mapsto \R^q$, where $X \subset \R^p$, is a {\it definable mapping} (or just definable, for
short) if its graph is a definable subset of $\R^{p+q}$.

The reader may refer to \cite{Cos,vdD1,vdDM}  to learn more on the properties of definable subsets and
definable mappings.

\medskip
 let $Z$ be a connected definable subset of $\R^n$.
The dimension of $Z$, $\dim Z$, is well defined.

A point  $z_0 \in Z$ is smooth if there exists a neighborhood $U \subset \R^n$ of $z_0$, such that $U \cap
Z$ is diffeomorphic to $\R^k$, for an integer $k \leqslant \dim Z$. The property of being smooth of a given
dimension is a locally open property once $Z$ is equipped with the induced topology.

A point $z_0$ which is not smooth is called singular.
The set of such points is definable.

\medskip
Let $S_1$ and $S_2$ be respectively $C^1$ definable submanifolds of $\R^n$ and of $\R^p$. Let $g : S_1
\mapsto S_2$ be a $C^1$ definable function. A point $x_0$ is a smooth or regular point of $g$ if the rank
$d_{x_0}g$ is maximum in a neighborhood of $x_0$.

A critical or singular point $x_0$ of $g$ is a point at which
$d_{x_0}g$ is not of maximum rank. The set of such points is
definable, denoted by $\crit (g)$.

\medskip
By abuse of language, we will talk about the rank of the mapping $g$
at a point $x_0$ to mean the rank of the differential $\ud_{x_0}g$.

\medskip
A {\it definable family} $\Cal{E} = (E_t)_{t\in T}$ of subsets of a definable submanifold $S \subset \Rn$,
with parameter space $T\subset \R^m$ does not only mean that $E_t$ is a definable subset of $S$, but that the
subset $\{(x,t) \in S \times T : x \in E_t \}$ is definable. Equivalently it means it is the family
of the (projection onto $S$ of the) fibers  of a definable mapping.
%
%
%
%
%
%
%
%
%
%
%
%
%
%
\section{Total curvature and total absolute curvature of a connected orientable hypersurface}
\label{sectionCACH}

\medskip
Let $M$ be a definable connected, $C^2$ hypersurface of $\R^n$. Assume $M$ is orientable and the
orientation is given by a  $C^1$ map, $M \ni x \to \nu_M (x) \in \Sr^{n-1}$. The map $\nu_M$ is
definable.

\medskip
Let $\Cal{U} \subset \Sr^{n-1}$  be defined as $\nu_M(M \setminus \crit(\nu_M))$, where
$\crit(\nu_M)$ is the set of critical points of $\nu_M$. The subset $\Cal{U}$ is a definable open
subset and, when not empty,
for any $u\in \Cal{U}$, $\nu_M^{-1}(u)$ is finite. By Gabrielov uniformity theorem, there exists a positive
integer $N_M$, such that $\#\nu_M^{-1}(u) \leqslant N_M$ for any $u \in \Cal{U}$.

\medskip
Since $\Cal{U}$ is a finite disjoint union of open definable subsets $U_i$, $i = 1,\ldots,d$, let $M_i =
\nu_M^{-1}(U_i) \setminus \crit(\nu_M)$ which is a definable subset of $M$. Let $s(i)$ be the number of
points in a fiber above any $u \in U_i$.

\medskip
Let $k_M(x)$ be the Gauss curvature at $x \in M$, that is the value of the determinant of the Jacobian
matrix of $\nu_M$. We make the convention that the $(n-1)$-dimensional volume of the empty set is $0$.

\begin{proposition}\label{propCACH1}
The total absolute curvature of $M$ is
\begin{center}
\smallskip
$|K|_M := \displaystyle{\int_M |k_M (x)| \ud v_{n-1} (x) = \sum_i s(i) \vol_{n-1}(U_i)}$.
\end{center}
\end{proposition}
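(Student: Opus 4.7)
The plan is to apply the change-of-variables (area) formula to the Gauss map $\nu_M$ viewed as a $C^1$ definable map between manifolds of the same dimension $n-1$, namely $M \setminus \crit(\nu_M) \to \Sr^{n-1}$.

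First I would note that, by definition, $k_M(x)$ is the Jacobian determinant of $\nu_M$ at $x$, so $k_M$ vanishes on $\crit(\nu_M)$ and
\begin{equation*}
\int_M |k_M| \, \ud v_{n-1} \;=\; \int_{M \setminus \crit(\nu_M)} |k_M| \, \ud v_{n-1}.
\end{equation*}
On the complement of $\crit(\nu_M)$ the map $\nu_M$ is a local diffeomorphism onto $\Cal{U}$, and $|k_M(x)| = |\jac \nu_M(x)|$. The area formula for $C^1$ maps between equidimensional Riemannian manifolds then yields
\begin{equation*}
\int_{M \setminus \crit(\nu_M)} |k_M(x)| \, \ud v_{n-1}(x) \;=\; \int_{\Sr^{n-1}} N(u) \, \ud v_{n-1}(u),
\end{equation*}
where $N(u)$ counts the points of $\nu_M^{-1}(u) \cap (M \setminus \crit(\nu_M))$.

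Next I would evaluate the right-hand integral by means of the decomposition $\Cal{U} = \bigsqcup_i U_i$. By construction $N \equiv 0$ outside $\Cal{U}$, while on $\Cal{U}$ the function $N$ is bounded by $N_M$ (Gabrielov's uniformity theorem, as recalled above) and is locally constant because $\nu_M$ is a local diffeomorphism on $M \setminus \crit(\nu_M)$. Since the $U_i$ were picked so that $N \equiv s(i)$ on $U_i$, summing gives
\begin{equation*}
\int_{\Sr^{n-1}} N(u) \, \ud v_{n-1}(u) \;=\; \sum_i s(i) \, \vol_{n-1}(U_i),
\end{equation*}
which is the desired identity.

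I do not expect any serious obstacle: the substantive ingredient is the classical area formula, and the definable framework is used only to extract the finite decomposition of $\Cal{U}$ into pieces of locally constant fiber multiplicity. If anything, the one point deserving a moment of care is to ensure that one may ignore the critical set on both sides --- on the source side because $|k_M|$ vanishes there, and on the target side because $N$ is defined to count preimages in $M \setminus \crit(\nu_M)$ only, so that no measure-theoretic statement about $\nu_M(\crit(\nu_M))$ needs to be invoked.
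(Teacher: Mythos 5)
Your proof is correct, but it reaches the identity by a genuinely different route than the paper. You invoke the classical area (change-of-variables) formula for the $C^1$ map $\nu_M$ restricted to $M\setminus\crit(\nu_M)$, which immediately converts $\int_M|k_M|=\int|\jac\,\nu_M|$ into $\int_{\Sr^{n-1}}N(u)\,\ud v_{n-1}(u)$ for the counting function $N$; the decomposition $\Cal{U}=\bigsqcup_i U_i$ with $N\equiv s(i)$ on $U_i$ then finishes the computation. The paper instead reproves this instance of the area formula by hand: it first disposes of the case where the generic rank of $\nu_M$ is at most $n-2$, then uses cylindrical decomposition to delete from each $M_i=\nu_M^{-1}(U_i)\setminus\crit(\nu_M)$ a definable subset of dimension $\leqslant n-2$ so that the remaining components are simply connected and $\nu_M$ restricts to $s(i)$ disjoint diffeomorphisms onto $U_i$, and it checks separately that the leftover set $M^*$ contributes nothing to the integral. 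Your version is shorter and puts the measure-theoretic weight on a standard external theorem, while the paper's stays self-contained within o-minimal tools (cell decomposition, Gabrielov uniformity); both are legitimate. One small caveat: your parenthetical claim that $N$ is locally constant on $\Cal{U}$ because $\nu_M$ is a local diffeomorphism only yields lower semicontinuity in general, since preimages can vanish into $\crit(\nu_M)$ or escape to infinity as $u$ varies; but this is not load-bearing, because the constancy $N\equiv s(i)$ on $U_i$ is already granted by the setup preceding the proposition, so your argument stands.
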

\begin{myproof}
If the maximal rank of $\nu_M$ is at most $n-2$, then $\crit (\nu_M) =M$ and so we deduce $|K|_M =0$.

So assume that $\nu_M$ is at most $n-1$.

Let us denote by $\jac$, the Jacobian of any mapping (when it makes sense). Since the mapping $\nu_M$ is of
maximal rank, the set of its critical values of $\nu_M$, that is $\nu_M (\crit(\nu_M))$, is of codimension
at least $1$ in $\Sr^{n-1}$. Each $U_i$ is open and so is each $M_i$ in $M$.

We assume first that each connected component of $M_i$ is simply connected. \\
Then $M_i = M_i^{(1)} \sqcup\ldots \sqcup M_i^{s(i)}$ and $\nu_M$ induces a diffeomorphism from $M_i^{(j)}$
onto $U_i$, for each $j=1,\ldots,s(i)$. Thus we find
\begin{center}
\smallskip
$\displaystyle{\vol_{n-1}(U_i) = \int_{U_i} \ud v_{n-1} =\int_{M_i^{(j)}} \jac(\nu_M (x))\ud v_{n-1} (x)}$,
for each $j$.
\smallskip
\end{center}
Thus we deduce
\begin{center}
\smallskip
$\displaystyle{\int_{M_i}} \jac(\nu_M (x))\ud v_{n-1} (x) = s(i)\vol_{n-1}(U_i)$.
\smallskip
\end{center}
As another consequence of this fact, if we define $M^*$ as $M \setminus (\sqcup_i M_i )$, then $M^*$ is
definable, and since $\nu_M (M^*)$ is at most of dimension $n-2$ (meaning that $\jac(\nu_M)$ is zero on a
definable dense open set of $M^*$ if dim $M^* = n-1$), we obtain
\begin{center}
\smallskip
 $\displaystyle{\int_{M^*}} \jac(\nu_M (x))\ud v_{n-1} (x)=0$.
 \smallskip
\end{center}
Then
\begin{center}
\smallskip
$|K|_M= \displaystyle{\int_M \jac(\nu_M (x))\ud v_{n-1} (x)= \sum_i \int_{M_i} \jac(\nu_M (x))\ud v_{n-1}
(x)}$,
\smallskip
\end{center}
which is the desired result.

\medskip
In the general situation, by the cylindrical decomposition theorem, there exists a closed subset $N_i$ of
$M_i$ of dimension at most $n-2$, such that each connected component of $M_i \setminus N_i$ is simply
connected. Then we do the same as above with $\nu_M (M_i \setminus N_i)$ instead of $U_i$. Since
$\vol_{n-1}(\nu_M (N_i)) = 0$, the formula given is still true in this general case
\end{myproof}

For each $i=1,\ldots,m$, let $\sigma_i^+$ be $\# (\nu_M^{-1}(u) \cap \{\det k_M >0\})$ and $\sigma_i^-$ 
be $\#(\nu_M^{-1}(u) \cap \{\det k_M <0\})$, for any $u \in U_i$, 
since these numbers depend only on $U_i$. We deduce that $s(i) =  \sigma_i^+ + \sigma_i^-$ 
and thus

\begin{center}
\smallskip
$|K|_M = \sum_i^{N_M} (\sigma_i^+ + \sigma_i^-)\vol_{n-1}(U_i)$.
\smallskip
\end{center}

\medskip
\noindent
 Let $\sigma_i = \sigma_i^+ - \sigma_i^-$. Note that $\sigma_i = \deg_u \nu_M = :
\deg(\nu_M|_{\nu_M^{-1}(U_i)})$ the degree of the mapping $\nu_M$ at any $u \in U_i$.

\begin{proposition}\label{propCACH2}
The total curvature of $M$ is
\begin{center}
\smallskip
$K_M := \displaystyle{\int_M k_M (x) \ud v_{n-1} (x)} = \sum_{i=1}^{N_M} \sigma_i\vol_{n-1}(U_i)$.
\smallskip
\end{center}
\end{proposition}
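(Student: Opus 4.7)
The argument proceeds in parallel with the proof of Proposition \ref{propCACH1}, but with the absolute Jacobian of $\nu_M$ replaced by its signed determinant. Recall that the orientation of $M$ is given by $\nu_M$ itself, so $k_M(x) = \det(\ud_x\nu_M)$ is by definition signed: for $x$ regular for $\nu_M$, the sign of $k_M(x)$ is $+1$ or $-1$ according as the local diffeomorphism $\nu_M$ preserves or reverses orientation at $x$. In particular, if the maximal rank of $\nu_M$ is at most $n-2$, then $k_M\equiv 0$ and both sides vanish, so we may assume the maximal rank equals $n-1$.

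Assume first that each connected component of $M_i$ is simply connected, so that $M_i = M_i^{(1)}\sqcup\cdots\sqcup M_i^{(s(i))}$ and each $\nu_M|_{M_i^{(j)}}$ is a diffeomorphism onto $U_i$. On each sheet $M_i^{(j)}$ the sign $\varepsilon_j\in\{+1,-1\}$ of $k_M$ is constant, and the signed change of variables formula yields
\[
\int_{M_i^{(j)}} k_M(x)\,\ud v_{n-1}(x) = \varepsilon_j\,\vol_{n-1}(U_i).
\]
By the very definition of $\sigma_i^+$ and $\sigma_i^-$, exactly $\sigma_i^+$ of these signs are $+1$ and $\sigma_i^-$ are $-1$, and summing over $j$ gives $\int_{M_i} k_M\,\ud v_{n-1} = (\sigma_i^+ - \sigma_i^-)\vol_{n-1}(U_i) = \sigma_i\,\vol_{n-1}(U_i)$.

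The general case reduces to the simply connected one through a cylindrical decomposition of each $M_i$, removing a closed definable subset $N_i$ of dimension at most $n-2$, exactly as in the proof of Proposition \ref{propCACH1}; since $\vol_{n-1}(\nu_M(N_i)) = 0$, this removal does not alter the integrals. The complementary piece $M^* = M\setminus\sqcup_i M_i$ contributes nothing because $k_M\equiv 0$ on a definable dense open subset of $M^*$, by the same rank argument as in the previous proof. The only delicate step is the orientation bookkeeping: one must check that the sign of $k_M$ on each sheet agrees with the orientation behavior of $\nu_M$ there, which is immediate from $k_M = \det(\ud\nu_M)$ and the choice of $\nu_M$ as orientation for both $M$ and $\Sr^{n-1}$. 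The same observation justifies the identification $\sigma_i = \deg(\nu_M|_{\nu_M^{-1}(U_i)})$ announced just before the statement.
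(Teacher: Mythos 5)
Your proposal is correct and follows exactly the route the paper intends: the paper's own proof of this proposition is simply the remark that it ``works exactly as in the proof of Proposition \ref{propCACH1}'', and what you have written is precisely that adaptation, replacing the absolute Jacobian by the signed one and tracking the constant sign of $k_M$ on each connected sheet $M_i^{(j)}$ to produce $\sigma_i = \sigma_i^+ - \sigma_i^-$. No gaps.
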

\begin{myproof}
It works exactly as in the proof of Proposition \ref{propCACH1}.
\end{myproof}

Let us come to a more specific property of the Gauss map. For any $u\in \Sr^{n-1}$, let $\varphi_u (x) :=
\langle x,u \rangle$ be the orthogonal projection on the oriented vector line $\R u$.

Let us consider the following
\begin{lemma}\label{lemCACH1}
Let $y \in M$ be a point at which the rank of $\ud_y \nu_M$ is $n-1$. Let $u = \nu_M (y)$. Then the
function $\varphi_u|_M$ has a Morse singular point at $y$, that is the Hessian matrix $\hes_y
(\varphi_u|_M)$ is non degenerate.
\end{lemma}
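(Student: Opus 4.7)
The plan is a direct computation showing that $\hes_y(\varphi_u|_M)$ equals (up to sign) the Weingarten bilinear form $(v,w) \mapsto \langle d_y\nu_M(v), w\rangle$ on $T_yM$, and then observing that this bilinear form is non-degenerate precisely when $d_y\nu_M$ has full rank.

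First, I would verify that $y$ is indeed a critical point of $\varphi_u|_M$. The function $\varphi_u$ on $\Rn$ has constant gradient $u$, so the differential of its restriction to $M$ at $y$ is $\langle u, \cdot\rangle$ acting on $T_yM$. Since $u = \nu_M(y)$ is by definition orthogonal to $T_yM$, this is the zero form, so $d_y(\varphi_u|_M) = 0$.

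Next, I would compute the Hessian intrinsically. Pick any smooth curve $\gamma : (-\varepsilon,\varepsilon) \to M$ with $\gamma(0) = y$ and $\gamma'(0) = v \in T_yM$. Differentiating $\varphi_u(\gamma(t)) = \langle \gamma(t), u\rangle$ twice yields
\begin{equation*}
\frac{d^2}{dt^2}\Big|_{t=0}\varphi_u(\gamma(t)) = \langle \gamma''(0), u\rangle.
\end{equation*}
On the other hand, differentiating the tangency relation $\langle \gamma'(t), \nu_M(\gamma(t))\rangle = 0$ at $t=0$ gives $\langle \gamma''(0), u\rangle + \langle v, d_y\nu_M(v)\rangle = 0$. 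Since $y$ is critical, this second derivative computes the Hessian, and polarization (or the symmetry of $d_y\nu_M$ as a self-adjoint endomorphism of $T_yM$) extends the formula to
\begin{equation*}
\hes_y(\varphi_u|_M)(v,w) = -\langle d_y\nu_M(v), w\rangle, \qquad v,w \in T_yM.
\end{equation*}

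To conclude, I would note that $d_y\nu_M$ maps $T_yM$ into $T_u\Sr^{n-1}$, and both spaces coincide with the hyperplane $u^\perp \subset \Rn$; thus $d_y\nu_M$ is an endomorphism of the $(n-1)$-dimensional space $T_yM$. The hypothesis that its rank is $n-1$ means $d_y\nu_M$ is an isomorphism, so the associated bilinear form $(v,w) \mapsto \langle d_y\nu_M(v), w\rangle$ is non-degenerate on $T_yM$. Consequently $\hes_y(\varphi_u|_M)$ is non-degenerate, i.e. $y$ is a Morse critical point of $\varphi_u|_M$.

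There is no real obstacle here: the only subtlety is keeping track of the identification $T_u\Sr^{n-1} = u^\perp = T_yM$, which ensures $d_y\nu_M$ can legitimately be viewed as an endomorphism whose invertibility is equivalent to the stated rank condition.
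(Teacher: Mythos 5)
Your proof is correct, and it reaches the same key identity as the paper --- namely that $\hes_y(\varphi_u|_M)$ and $\ud_y\nu_M$ coincide up to sign under the identification $T_yM = u^\perp = T_u\Sr^{n-1}$ --- but by a genuinely different computation. The paper works in adapted coordinates: it invokes the (definable) implicit function theorem to present $M$ near $y$ as a graph $x_n=\phi(\mathbf{x})$ over $T_yM$, so that $\varphi_u|_M=\phi$, writes $\nu_M=\nabla(x_n-\phi)/|\nabla(x_n-\phi)|$, and differentiates this explicit formula to read off $\ud_y\nu_M=\pm\hes_y\phi$. You instead stay coordinate-free: you check that $y$ is critical (so the Hessian is well defined independently of charts), compute second derivatives along curves in $M$, and obtain $\hes_y(\varphi_u|_M)(v,w)=-\langle \ud_y\nu_M(v),w\rangle$ by differentiating the tangency relation $\langle\gamma',\nu_M\circ\gamma\rangle=0$ and polarizing via the self-adjointness of the Weingarten map. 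Your route avoids the graph presentation entirely and makes the criticality of $y$ explicit (the paper leaves this implicit); the paper's route has the minor advantage of producing the local model $\phi$ that it reuses in the surrounding discussion. Incidentally, your sign ($\hes_y(\varphi_u|_M)=-\langle\ud_y\nu_M\cdot,\cdot\rangle$, i.e. $+\hes_y\phi$ in the graph model) is the consistent one; the sign in the paper's displayed formula is off by the derivative of $\nabla(x_n-\phi)=(-\nabla\phi,1)$, but this is immaterial for non-degeneracy.
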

\begin{proof}
Let us identify the hyperplane $T_yM$ with $\R^{n-1}$ with coordinates ${\bf x} = (x_1,\ldots,x_{n-1})$
centered at $y$ and remember that $\ud_y \nu_M$ can be seen as a reflexive endomorphism of $T_yM =
T_u\Sr^{n-1}$. Since it is of rank $n-1$, it has exactly $´(n-1)$ non-zero real eigenvalues.

By the definable implicit function theorem, there exist an open neighborhood $\Scr{U}$ of the origin in
the hyperplane $T_yM$ and a $C^1$-definable map $\phi : \Scr{U} \mapsto \R$ such that there exists a
neighborhood $\Scr{V}$ of $y$ in $M$ such that $\Scr{V} = \{x_n = \varphi({\bf x}) : {\bf x} \in
\Scr{U}\}$. Thus we find that $\varphi_u|_M (x) = \phi({\bf x})$. We thus rewrite $\nu_M$ as
\begin{center}
\smallskip
$\displaystyle{\nu_M (x) = \nu_M ({\bf x}) = \frac{\nabla(x_n - \phi ({\bf x}))}{|\nabla(x_n - \phi ({\bf
x}))|}}$.
\smallskip
\end{center}
For any $u \in T_yM$ we find that
\begin{eqnarray*}
\ud_y \overline{\nu}_M \cdot \xi & = & \frac{1}{|\nabla(x_n - \phi ({\bf x}))|} \left [ \hes_y \phi \cdot
\xi - \langle \hes_y \phi \cdot \xi,u\rangle \; u\right]
\\
& = & \frac{1}{|\nabla(x_n - \phi ({\bf x}))|}
 \hes_y \phi \cdot \xi,
\end{eqnarray*}
where $\hes_y \phi$ is the Hessian matrix of $\phi$ at $y$. Since $\ud_y \nu_M$ is of rank $n-1$, we deduce that
$\hes_y (\varphi_u|_M) = \hes_y \phi$, and thus is Morse at $y$.
\end{proof}

Thanks to Lemma \ref{lemCACH1} we can then define the following
\begin{definition}\label{defCACH1}
The Morse index of $M$ at $x \notin \crit (\nu_M)$ is the integer $\lambda_M (x)$ define as the Morse index
at $x$ of the function $\varphi_u|_M$, with $u = \nu_M (x)$.
\end{definition}
Obviously this definition depends on the choice of $\nu_M$.

\medskip
Let $U$ be a connected component of $\Cal{U} = \nu_M (M) \setminus \nu_M(\crit (\nu_M))$. Thus $\nu_M$
induces a finite $C^{l-1}$ covering
\smallskip
\begin{center}
$\nu_M : M_1 \sqcup \ldots \sqcup M_d \mapsto U$.
\end{center}

\begin{proposition}\label{propCACH4}
With the previous notation, the function $M_j \ni x \to \lambda_M (x)$ is definable and so is constant.
\end{proposition}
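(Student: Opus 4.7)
The plan is to establish that $\lambda_M : M_j \to \Z$ is both definable and locally constant, and then to conclude by the connectedness of $M_j$.

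First, I would make the Morse index definable as a function on $M_j$. The family $\{\varphi_u|_M : u \in \Sr^{n-1}\}$ of height functions is definable in the parameters $(x,u) \in M \times \Sr^{n-1}$, and so is the second-derivative assignment $(x,u) \to \hes_x(\varphi_u|_M)$, viewed (via any definable local chart) as a symmetric bilinear form on $T_xM$; its signature is chart-independent. Specializing to $u = \nu_M(x)$, Lemma \ref{lemCACH1} guarantees that this bilinear form is nondegenerate for every $x \in M_j$, and $\lambda_M(x)$ is precisely its Morse index, i.e.\ the number of negative eigenvalues. Since the condition ``symmetric bilinear form has exactly $k$ negative eigenvalues'' is a first-order definable condition on the matrix entries (for instance via the signs of the leading principal minors of $-\hes_y\phi$, or directly through a formula involving the characteristic polynomial), $\lambda_M$ is definable as a function $M_j \to \{0,1,\ldots,n-1\}$.

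Next, I would prove local constancy. On the open set of nondegenerate symmetric matrices, the number of negative eigenvalues is a locally constant function: eigenvalues vary continuously with the entries, and none can change sign without first vanishing, which is excluded by nondegeneracy. Composing with the continuous assignment $x \to \hes_x(\varphi_{\nu_M(x)}|_M)$, we conclude that $\lambda_M$ is locally constant on $M_j$. Since a locally constant integer-valued function on a connected set is constant, $\lambda_M$ is constant on the connected definable set $M_j$.

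The only mild technicality, which I expect to be the main point requiring care, is the coherent definable construction of the Hessian bilinear form across $M_j$: one wants a globally definable map $x \to \hes_x(\varphi_{\nu_M(x)}|_M)$ rather than a merely pointwise one. This is handled by applying the definable implicit function theorem (exactly as in the proof of Lemma \ref{lemCACH1}) over a finite definable cover of $M_j$ by local graph charts, and observing that the intrinsic index computed in each chart agrees on overlaps; equivalently, one may work directly with the intrinsic bilinear form $(d^2\varphi_u|_M)_x$ on $T_xM$, whose dependence on $(x,u)$ is plainly definable from the data $(M, \nu_M)$.
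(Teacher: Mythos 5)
Your proposal is correct and follows essentially the same route as the paper: both arguments establish definability of the Hessian of $\varphi_{\nu_M(x)}|_M$ as a function of $x$, use continuity of its eigenvalues together with the nondegeneracy from Lemma \ref{lemCACH1} to see that the number of negative eigenvalues cannot jump, and conclude by connectedness of $M_j$. The paper phrases this via the characteristic polynomial and its roots rather than the signature directly, but this is only a cosmetic difference; your extra remark on coherently defining the Hessian across charts is a point the paper handles implicitly by fixing an orthonormal basis of $\R^n$.
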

\begin{proof}
Each subset $M_j$ is a definable and open in $M$. Given an orthonormal basis of $\Rn$, let $P(x;T) \in \R
[T]$ be the characteristic polynomial of $\hes_x (\varphi_{\nu_M (x)}|_M)$. The coefficients of this
polynomial are definable functions of $x \in M \setminus \crit (\nu_M)$ and $C^{l-1}$. As functions of $x$,
the roots (counted with multiplicity) are continuous and definable and all non zero. Since $M_j$ is
connected, the number of negative roots is constant and so is the index $\lambda_M (x)$.
\end{proof}

There is a straightforward corollary of Lemma \ref{lemCACH1} and Proposition \ref{propCACH4}
\begin{corollary}\label{corCACH1}
Let $y \in M$ be a point at which the rank of $\ud_y \nu_M$ is $n-1$. Let $u = \nu_M (y)$. Then the
function Morse index of $\varphi_u|_M$ and the index of $\ud_y \nu_M$ are the same.
\end{corollary}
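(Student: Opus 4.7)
The plan is to leverage the explicit computation already performed in the proof of Lemma \ref{lemCACH1}, which established that in suitable local coordinates at $y$,
\[
\ud_y \nu_M = \frac{1}{|\nabla(x_n - \phi(\mathbf{x}))|}\,\hes_y \phi,
\]
together with the identification $\hes_y(\varphi_u|_M) = \hes_y \phi$. Once this is in hand, the corollary amounts to the linear-algebra fact that two symmetric endomorphisms differing by a positive scalar factor have the same inertia, in particular the same number of negative eigenvalues.

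First I would recall that $\ud_y \nu_M$, viewed as an endomorphism of $T_y M = T_u \Sr^{n-1}$, is self-adjoint (this is the classical shape operator / Weingarten endomorphism; it also appears as a reflexive endomorphism in the statement of Lemma \ref{lemCACH1}). By the formula displayed above, $\ud_y \nu_M$ is obtained from $\hes_y(\varphi_u|_M)$ by multiplication by the strictly positive scalar $|\nabla(x_n - \phi(\mathbf{x}))|^{-1}$, evaluated at $y$. Thus $\ud_y \nu_M$ and $\hes_y(\varphi_u|_M)$ have the same eigenvectors, and their eigenvalues differ by multiplication by a common positive constant.

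Consequently, the sign of every eigenvalue is preserved. Since the rank hypothesis guarantees that no eigenvalue is zero (Lemma \ref{lemCACH1} asserts the Hessian is non-degenerate, and the same holds for $\ud_y \nu_M$), the number of negative eigenvalues of $\ud_y \nu_M$ equals the number of negative eigenvalues of $\hes_y(\varphi_u|_M)$. By definition, the latter count is the Morse index of $\varphi_u|_M$ at $y$, while the former is the index of $\ud_y \nu_M$, proving the corollary.

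There is essentially no obstacle here: the content is entirely packaged in Lemma \ref{lemCACH1}, and the corollary is a one-line consequence of the scalar multiplication relation, once one observes that both operators act on the same tangent space and that the scalar factor is positive. The only point meriting a word of care is to confirm that $\ud_y \nu_M$ is indeed self-adjoint (so that "index" in the eigenvalue sense is well defined), but this follows from the symmetry of the second fundamental form.
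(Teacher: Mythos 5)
Your proof is correct and follows exactly the route the paper intends: the paper states the corollary as a ``straightforward'' consequence of Lemma \ref{lemCACH1}, whose proof already exhibits $\ud_y \nu_M$ as a positive scalar multiple of $\hes_y(\varphi_u|_M)$, and your argument simply makes explicit that this scalar relation preserves the inertia. No discrepancy with the paper's approach.
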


Thus we can also rewrite the total curvature as

\bigskip
\begin{center}
$K_M  = \displaystyle{\int_{\nu_M(M) \setminus \nu_M(\crit (\nu_M))}}$ $\left(\sum_{x \in \nu_M^{-1}(u)\setminus
\crit (\nu_M)} (-1)^{\lambda_M (x)} \right)\; \; \ud v(u)$,
\end{center}

\medskip
since the set of points $\nu_M^{-1} (\crit (\nu_M)) \cap (M \setminus \crit(\nu_M))$ is at most of dimension
$n-2$.
%
%
%
%
%
%
%
%
%
%
%
%
\section{On the Gauss map of a tame function}\label{sectionGMTF}

Let $f : \R^n \mapsto \R$, be a definable function, enough
differentiable, say $C^l$, with $l\geqslant 2$.

Let us denote by $K_0 (f)$ the set of critical values of $f$.

For each $t$, let $F_t$ be the level $f^{-1} (t)$.

The Gauss map of the function $f$ is the following mapping

\medskip
\begin{tabular}{rcccc}
\hspace{3cm} $\nuf$ &  :& $\R^n \setminus \crit (f)$ & $\mapsto$&
$\Sr^{n-1}$ \\
& & $x$ & $\to$ & $\displaystyle{ \frac{\nabla f (x)}{|\nabla f
(x)|}}$
\end{tabular}

\medskip
It is a definable and $C^{l-1}$ mapping. We will denote by $\nu_t$ the restriction $\nuf|_{F_t}$,
providing an orientation to each (connected component of the) level $F_t$ which is compatible with the
transverse structure of the foliation (on $\R^n \setminus \crit (f)$) by the connected components of
the (regular) levels of $f$. Note that $\crit(\nuf) \cap F_t = \crit(\nu_t)$.

\medskip
The differential mapping of $\nuf$ at $x \notin \crit (f)$, is

\smallskip
\begin{center}
$\ud_x \nuf : \xi \mapsto \displaystyle{\frac{1}{|\nabla f (x)|} [\hes_x(f) \cdot \xi - \langle \hes_x(f)
\cdot \xi, \nuf (x) \rangle \nuf (x)]}$
\end{center}

\medskip
We recall that for any $x \in F_t$, the linear mapping  $\ud_x \nu_t = \ud_x \nuf|_{T_xF_t}$ seen as an
endomorphism of $T_{\nu_f(x)} \Sr^{n-1}$ is reflexive.

\begin{definition}\label{defGMTF1}
Let $x \notin \crit (f) \cup \crit(\nuf)$. The tangent Gauss index of $f$ at $x$, denoted
by $\lambda_f (x)$ is the index of the reflexive endomorphism $\ud_x \nuf|_{T_xf^{-1}(x)}$.
\end{definition}

\begin{proposition}\label{propGMTF1}
The function $x \to \lambda_f (x)$ is a locally constant definable mapping.
\end{proposition}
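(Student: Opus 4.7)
The plan is to reduce this statement to the same argument used for Proposition \ref{propCACH4}, applied uniformly to the one-parameter family of level hypersurfaces $F_t = f^{-1}(t)$.

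First I would check that the domain $\Omega := \R^n \setminus (\crit(f) \cup \crit(\nu_f))$ is definable and open in $\R^n$, which is immediate from the fact that $\crit(f)$ and $\crit(\nu_f)$ are definable closed subsets. At every $x \in \Omega$ the point $x$ lies in a regular level $F_{f(x)}$, which is a $C^l$ definable hypersurface near $x$; its tangent space $T_x F_{f(x)}$ coincides with the orthogonal complement of $\nu_f(x)$ in $\R^n$, which is exactly $T_{\nu_f(x)} \Sr^{n-1}$. So the restriction $\ud_x\nu_f|_{T_x F_{f(x)}}$ is well-defined as a reflexive endomorphism of this $(n-1)$-dimensional Euclidean space, and on $\Omega$ it is of maximal rank $n-1$.

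Next I would consider the characteristic polynomial $P(x;T) \in \R[T]$ of this endomorphism. Concretely, one can form the $n \times n$ matrix of $\ud_x\nu_f$ in the canonical basis of $\R^n$ (which, from the explicit formula given just before Definition \ref{defGMTF1}, has $C^{l-1}$ definable entries on $\Omega$), then take its restriction to $T_x F_{f(x)}$ by means of a definable continuous local orthonormal frame of this tangent space (which exists around any point of $\Omega$ by the definable implicit function theorem, as in the proof of Lemma \ref{lemCACH1}). The coefficients of $P(x;T)$ are therefore definable and continuous functions of $x \in \Omega$, and they do not depend on the choice of local frame since $P$ is an intrinsic invariant of the endomorphism.

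On $\Omega$ the endomorphism $\ud_x\nu_f|_{T_xF_{f(x)}}$ is nonsingular, so all $n-1$ real roots of $P(x;T)$ are non-zero. The roots, counted with multiplicity, depend continuously and definably on $x$, hence the number of negative roots, namely $\lambda_f(x)$, is constant on each connected component of $\Omega$. The map $\lambda_f$ is therefore locally constant; its definability follows either from this local constancy together with the fact that $\Omega$ has only finitely many definable connected components, or directly from the definable characterization
\begin{equation*}
\lambda_f(x)=k \iff \text{$P(x;T)$ has exactly $k$ negative and $n-1-k$ positive real roots counted with multiplicity.}
\end{equation*}
The main obstacle is mainly bookkeeping: checking that the restriction to the varying tangent spaces $T_xF_{f(x)}$ can be performed in a globally definable way so that the coefficients of $P(x;T)$ are honest definable functions of $x$. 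Once this is done, the standard continuity argument for roots of a definable family of polynomials with no zero root closes the proof.
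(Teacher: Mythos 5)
Your proposal follows essentially the same route as the paper's own proof: fix the characteristic polynomial $P(x;T)$ of the reflexive endomorphism $\ud_x\nuf|_{T_xF_{f(x)}}$, note its coefficients are definable and continuous on $\Rn\setminus(\crit(f)\cup\crit(\nuf))$, and conclude from the continuity and definability of the (nowhere vanishing) roots that the number of negative roots is constant on each connected component, with definability of $\lambda_f$ following from its local constancy on a definable set. Your added care about the non-vanishing of the roots and about realizing the restriction to the varying tangent spaces via definable local frames only makes explicit what the paper leaves implicit, so the argument is correct and matches the paper's.
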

\begin{proof} Assume a fixed orthonormal basis of $\Rn$ is given. Let $P(x,T) \in \R [T]$ be the characteristic
polynomial of the endomorphism $\ud_x \nuf|_{T_xf^{-1}(x)}$. From the computation of the differential
$\ud_x \nuf$ we deduce that the coefficients of this polynomial are definable and $C^{l-2}$. Given $x$, let
$\alpha_1(x) \leqslant \ldots \leqslant \alpha_{n-1}(x)$ be the roots of this polynomial. For each
$i=1,\ldots,n-1$, the functions $x \to \alpha_i (x)$ is continuous and definable. For this reason the
number of negative roots (that is the tangent Gauss index) is constant on each connected components of $\Rn
\setminus (\crit (f) \cup \crit(\nuf))$, that is $x \to \lambda_f (x)$ is constant on $\Rn \setminus  
(\crit (f) \cup \crit(\nuf))$. \\
Since the mapping $x \to \lambda_f (x)$ is constant on each connected component of $\Rn \setminus (\crit
(f) \cup \crit(\nuf))$, then its graph is a definable subset of $\Rn \times \R$ since $\Rn \setminus (\crit
(f) \cup \crit(\nuf))$ is a definable subset of $\Rn$.
\end{proof}

This property will be very useful in Section \ref{sectionCCF}.

\medskip
When $\crit(\nuf) \neq \emptyset$, let us consider the following $C^{l-1}$ definable mapping:

\smallskip
\begin{center}
$\Psi_f : \R^n \setminus (\crit (f) \cup \crit(\nuf))\mapsto \Sr^{n-1} \times \R $ defined as $x \to
(\nuf(x),f(x))$.
\end{center}

\medskip
This mapping is definable and $C^{l-1}$. It is also a local diffeomorphism at any of the point of $\R^n
\setminus (\crit (f) \cup \crit(\nuf))$, thus its image $\widetilde{\Cal{U}}$ is open.

\begin{definition}\label{defGMTF2}
Let $(u,t) \in \widetilde{\Cal{U}}$. The tangent Gauss degree of $f$ at $(u,t)$, denoted $\deg_f (u,t)$ is the
degree of $\Psi_f$ at $(u,t)$.
\end{definition}

Then we get the following,

\begin{proposition}\label{propGMTF2}
The function $(u,t) \to \deg_f (u,t)$ is a locally constant definable mapping.
\end{proposition}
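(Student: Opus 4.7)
The plan is to mimic the strategy used for Proposition \ref{propGMTF1}, but with the integer-valued invariant now being a signed count of preimages rather than a count of negative eigenvalues of a single Hessian. The key features I would exploit are that $\Psi_f$ is a definable $C^{l-1}$ local diffeomorphism between manifolds of the same dimension $n$, and that we are working in an o-minimal setting so all the natural ingredients are definable and uniformly bounded.

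First, I would establish that each fiber $\Psi_f^{-1}(u,t)$ is finite, with cardinality uniformly bounded by some integer $N_f$. Since $\Psi_f$ is a local diffeomorphism, fibers are discrete; since they are also definable, Gabrielov's uniformity theorem (used in the same way as at the start of Section \ref{sectionCACH} for $\nu_M$) provides the uniform bound. This lets me write the degree as
\begin{equation*}
\deg_f (u,t) \;=\; \sum_{x \in \Psi_f^{-1}(u,t)} \mathrm{sgn}\bigl(\det \ud_x \Psi_f\bigr),
\end{equation*}
which is well defined on all of $\widetilde{\Cal{U}}$ because $\ud_x \Psi_f$ is invertible at every point of the domain.

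Next, for local constancy around a fixed $(u_0,t_0) \in \widetilde{\Cal{U}}$, I would apply the inverse function theorem at each of the (finitely many) preimages $x_1, \ldots, x_r$. Each $x_i$ has an open neighborhood $W_i$ on which $\Psi_f$ restricts to a diffeomorphism onto an open neighborhood $V_i$ of $(u_0,t_0)$, producing a definable section $s_i : V_i \to W_i$. On the connected open set $V_0 := \bigcap_i V_i$, the sign $\mathrm{sgn}(\det \ud_{s_i(u,t)}\Psi_f)$ is a continuous $\{-1,+1\}$-valued function, hence constant and equal to $\mathrm{sgn}(\det \ud_{x_i}\Psi_f)$. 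To rule out that extra preimages enter $V_0$ (the main delicate point), I would invoke the o-minimal cell decomposition of $\Psi_f$: there is a finite partition of $\widetilde{\Cal{U}}$ into definable cells $C_1, \ldots, C_N$ such that over each $C_j$ the number of preimages is constant (bounded by $N_f$), and the preimages are organized as graphs of finitely many definable $C^{l-1}$ sections. Shrinking $V_0$ inside the cell containing $(u_0,t_0)$ leaves only the preimages $x_1, \ldots, x_r$, so the degree is constant on that neighborhood.

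Finally, for definability of $(u,t) \to \deg_f(u,t)$: having partitioned $\widetilde{\Cal{U}}$ into the definable cells $C_j$ described above, the function is constant on each $C_j$, so its graph is a finite union of definable subsets of $\widetilde{\Cal{U}} \times \Z$ and hence definable. The step I expect to be the main obstacle is precisely the control of new preimages appearing near $(u_0,t_0)$, since $\Psi_f$ need not be proper; this is what forces me to lean on the o-minimal triviality of definable maps rather than on a purely analytic argument.
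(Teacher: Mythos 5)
Your overall strategy is the same as the paper's: the paper's entire proof of Proposition \ref{propGMTF2} is the single sentence that the claim ``comes from the fact that $\Psi_f$ is a local diffeomorphism at each point of its definition domain'', and your inverse-function-theorem argument with local sections $s_i$ and locally constant signs $\mathrm{sgn}(\det \ud_{s_i(u,t)}\Psi_f)$ is the honest expansion of that sentence. You also correctly isolate the one step this does not cover, namely that $\Psi_f$ is not proper, so preimages may appear or disappear as $(u,t)$ moves; the paper's proof does not address this at all.

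However, your proposed repair of that step does not work as written. Cell decomposition gives a finite definable partition of $\widetilde{\Cal{U}}$ over which the fiber count is constant, and that is enough for definability of $(u,t)\mapsto\deg_f(u,t)$ (constant on finitely many definable pieces, hence definable graph). But ``shrinking $V_0$ inside the cell containing $(u_0,t_0)$'' only produces a neighborhood of $(u_0,t_0)$ when that cell is open in $\Sr^{n-1}\times\R$; if $(u_0,t_0)$ lies in a cell of dimension $<n$, every neighborhood of it meets adjacent open cells on which the fiber count may be strictly larger, and your argument says nothing about the degree there. Note that the inverse function theorem already gives one inequality for free: each $x_i$ persists, so the fiber count is lower semicontinuous. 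The genuine danger is extra preimages coming in from infinity or from $\crit(f)\cup\crit(\nuf)$, each carrying a sign $\pm1$ with no a priori reason to cancel; this is exactly the phenomenon the paper itself exhibits later (branches $\Cal{G}_j$ of $\Psi_f^{-1}(\{u\}\times\R)$ with $f(\Cal{G}_j)=\,]c-\varepsilon,c[$ in the proof of Proposition \ref{propCCF5}), so it cannot be dismissed. To close the gap you would need an argument that such escaping branches do not change the signed count on a full neighborhood of $(u_0,t_0)$, or you should restrict the local-constancy claim to the open set where the fiber count is locally constant. As it stands, your write-up proves definability and proves local constancy only on the union of the open cells, which is essentially the same amount of content as the paper's one-line proof together with the definability remark.
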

\begin{proof}
This comes from the fact that $\Psi_f$ is a local diffeomorphism at each point of its definition domain.
\end{proof}

More interestingly we also have

\begin{proposition}\label{propGMTF3}
For any $(u,t) \in \widetilde{\Cal{U}}$,
\begin{center}
$\deg_{(u,t)} \Psi_f = \deg_u \nu_t = \displaystyle{\sum_{x \in \nu_t^{-1}(u)} (-1)^{\lambda_f (x)}}$.
\end{center}
\end{proposition}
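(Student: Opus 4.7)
The plan is to compute $\det d_x\Psi_f$ at a preimage $x$ of $(u,t)$ in an adapted basis, and recognize its sign as $(-1)^{\lambda_f(x)}$, so that the local-degree formula immediately identifies $\deg_{(u,t)}\Psi_f$ with both $\deg_u \nu_t$ and the signed count $\sum (-1)^{\lambda_f(x)}$.

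First I would observe that since $f(x) = t$ forces $x \in F_t$, the fiber $\Psi_f^{-1}(u,t)$ coincides with $\nu_t^{-1}(u)$ (intersected with $\Rn \setminus (\crit(f) \cup \crit(\nu_f))$). Fix such an $x$, set $u = \nu_f(x)$, and split the source as $\R^n = T_xF_t \oplus \R\, \nu_f(x)$ and the target as $T_{(u,t)}(\Sr^{n-1} \times \R) = T_u\Sr^{n-1} \oplus \R$. Under the canonical identification $T_xF_t = T_u\Sr^{n-1}$ (both are the hyperplane orthogonal to $\nu_f(x)$), the restriction of $d_x\nu_f$ to $T_xF_t$ equals $d_x\nu_t$, which is precisely the reflexive endomorphism appearing in Definition \ref{defGMTF1}.

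Next I would write $d_x\Psi_f$ as a block matrix in this splitting. The first column block (the image of $T_xF_t$) is $(d_x\nu_t,\, 0)$, because $d_xf$ annihilates $T_xF_t$. The second column block (the image of $\nu_f(x)$) is of the form $(\ast,\, |\nabla f(x)|)$ since $d_xf \cdot \nu_f(x) = |\nabla f(x)|$. Consequently
\[
\det d_x\Psi_f \;=\; |\nabla f(x)| \cdot \det d_x\nu_t,
\]
so $\mathrm{sgn}\det d_x\Psi_f = \mathrm{sgn}\det d_x\nu_t = (-1)^{\lambda_f(x)}$ by Corollary \ref{corCACH1}. In particular $d_x\Psi_f$ is invertible, confirming that $\Psi_f$ is a local diffeomorphism at $x$ and that $\nu_t^{-1}(u)$ is a finite set near which $\Psi_f$ counts exactly as $\nu_t$ does.

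Applying the standard local formula for the degree of a smooth map at a regular value yields
\[
\deg_{(u,t)}\Psi_f \;=\; \sum_{x \in \Psi_f^{-1}(u,t)} \mathrm{sgn}\det d_x\Psi_f \;=\; \sum_{x \in \nu_t^{-1}(u)} (-1)^{\lambda_f(x)},
\]
and the same formula applied to $\nu_t$ at $u$ produces $\deg_u \nu_t = \sum_{x \in \nu_t^{-1}(u)} (-1)^{\lambda_f(x)}$, giving all three equalities simultaneously. The only mildly delicate point is verifying the vanishing of the lower-left block of $d_x\Psi_f$ and the correct identification of $T_xF_t$ with $T_u\Sr^{n-1}$; once that is in place the conclusion is purely arithmetic.
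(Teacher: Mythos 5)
Your proof is correct and follows essentially the same route as the paper: the paper works in a direct orthonormal frame $(v_1,\ldots,v_{n-1},\nu_f)$ adapted to $T_xF_t$ and asserts that $\det(\ud_x\Psi_f)$ and $\det(\ud_x\nu_t)$ have the same sign, which is exactly what your block-triangular computation $\det \ud_x\Psi_f = |\nabla f(x)|\cdot\det \ud_x\nu_t$ makes explicit. Your version spells out the lower-left zero block and the factor $|\nabla f(x)|>0$ more concretely than the paper does, but the underlying idea (adapted splitting, sign comparison, local degree formula) is identical.
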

\begin{proof}
Let $y \in \Psi_f^{-1}(u,c)$. Let $\Scr{F} :=(v_1,\ldots,v_{n-1},v_n)$, with $v_n = \nu_f$, be a $C^1$ and
direct orthonormal frame in a neighborhood $\Cal{Y}$ of $y$. With such "coordinates", for any $x \in 
F_t \cap \Cal{Y}$, both $\ud_x \Psi_f$ and $\ud_x \nu_t$ are considered as endomorphisms. For $x \in F_t \cap \Cal{Y}$, writing the matrices of $\ud_x \Psi_f$ and $\ud_x \nu_t$ in this frame, gives
\begin{center}
$\det ({\rm Mat}_{\Scr{F}(x)}\ud_x \Psi_f) \cdot \det({\rm Mat}_{\Scr{F}(x)} \ud_x \nu_t)>0$,
\end{center}
since the frame respects the orientations of $\Rn$ and of $T_xF_t$ and where ${\rm
Mat}_{\Scr{F}(x)}$ denotes the matrix in the base $\Scr{F}(x) :=(v_1(x), \ldots,v_{n-1}(x),v_n(x))$.
Thus the signs of these determinants are the same and so is proved the lemma.
\end{proof}
%
%
%
%
%
%
%
%
%
%
%
%
%
%
%
\section{Hausdorff limits of Gauss images}\label{sectionHLGI}

We use notations of Section \ref{sectionGMTF}.

\medskip
We recall that $\widetilde{\Cal{U}} = \Psi_f (\R^n \setminus (\crit (f) \cup \crit (\nuf)))$ is open and
definable in $\Sr^{n-1} \times \R$ and that for any $(u,t) \in \widetilde{\Cal{U}}$, $\Psi_f^{-1}(u,t)$ is
finite.

From Gabrielov Uniformity theorem,  there exists a positive integer $N_f$ such that for any $(u,t)
\in \widetilde{\Cal{U}}$, $\#\Psi_f^{-1}(u,t) \leqslant N_f$.

Let us define the following definable sets

\medskip
\begin{tabular}{lll}
$\widetilde{\Cal{U}}_k$ & = & $\{(u,t) \in \widetilde{\Cal{U}} :
\#\Psi_f^{-1}(u,t) = k \}$.  \\
$\Cal{U}_t$ & = & $\{u \in \Sr^{n-1} : (u,t) \in \widetilde{\Cal{U}}\}$ = $
\nu_t (F_t \setminus \crit (\nuf)) = \nu_t (F_t \setminus \crit (\nu_t))$.
\\
$\Cal{U}_{k,t}$ & = & $\{u \in \Sr^{n-1} : (u,t) \in \widetilde{\Cal{U}}_k \}$.
\end{tabular}

\medskip
The subsets $\Cal{U}_t$ and $\Cal{U}_{k,t}$ are open in $\Sr^{n-1}$.

Note that at any point of $\R^n \setminus (\crit (f)\cup \crit(\nuf))$, $\Psi_f$ is a local diffeomorphism.

Let us remark that both families $(\Cal{U}_t)_{t\in f(\Rn) \setminus K_0 (f)}$ and
$(\Cal{U}_{k,t})_{t\in f(\Rn) \setminus K_0 (f)}$ are definable families of open subsets of
$\Sr^{n-1}$.

\bigskip
Let $\Cal{K}(\R^n)$ be the space of compact subsets of $\R^n$.

Given $Y$ and $Z$ compacts subsets of $\Rn$, the Hausdorff distance between $Y$ and $Z$, denoted by
$\ud_{\Cal{K}(\R^n)}(Y,Z)$, is defined as

\medskip
\begin{tabular}{rcl}
\vspace{6pt}
$\ud_{\Cal{K}(\R^n)}(Y,Z)$ & = & $\max(\min_{y\in Y}{\rm dist}(y,Z),\min_{z\in Z}{\rm dist}(z,Y))$
\\
\vspace{6pt}
& = & $\min \{r\geqslant 0 : \forall y \in Y \mbox { and } \forall z \in Z, \;  {\rm dist}(y,Z) \leqslant r $ \\
& & $ \hfill \mbox{ and } {\rm dist}(z,Y) \leqslant r\}$
\end{tabular}

\medskip
The space $\Cal{K}(\R^n)$ equipped with the Hausdorff distance $\ud_{\Cal{K}(\R^n)}$ becomes a 
complete metric space.

\medskip
Bounded definable families of compacts subsets behave well under the Hausdorff limit, (see
\cite{Br,vdD2,LS} for a general frame). Since we are only interested in  $1$-parameter families of such
subsets, the statement of the next result is given in this context and in the form we will use it below
in the rest of this paper.

\begin{theorem}[\cite{Br,vdD2,LS}]
Let $(\Cal{C}_t)_{t\in [0,1[}$ be a definable family of closed subsets of $\Rn$. Assume there exists a
compact subset $Q$ such that $\Cal{C}_t \subset Q$ for each $t\in[0,1[$. Then, the Hausdorff limit
$\Cal{C}_1 :=\lim_{t\fl 1}\Cal{C}_t$ does exist and is a definable subset of $\Rn$ contained in $Q$.
\end{theorem}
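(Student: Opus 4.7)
The plan is in three moves: produce a natural candidate for $\Cal{C}_1$ intrinsically from the family, prove a pointwise Hausdorff-type estimate using the o-minimal monotonicity theorem, then upgrade it to uniform convergence via definable choice.

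First, I would build the candidate from the graph of the family. Let $E := \{(x,t) \in Q \times [0,1[ \, : \, x \in \Cal{C}_t\}$, a definable subset of $\Rn \times \R$, and let $\overline{E}$ be its topological closure inside the compact set $Q \times [0,1]$, which is still definable. Set
$$\Cal{C}_1 := \{x \in Q : (x,1) \in \overline{E}\}.$$
This is definable and contained in $Q$ by construction, so the definability and inclusion assertions of the theorem are automatic, and the only remaining task is Hausdorff convergence.

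For the pointwise input, for each fixed $y \in Q$ the function $t \mapsto {\rm dist}(y,\Cal{C}_t)$ is definable and bounded on $[0,1[$, so by the o-minimal monotonicity theorem it admits a finite limit $\ell(y)$ at $t=1$. When $y \in \Cal{C}_1$, the very definition of $\overline{E}$ furnishes a sequence in $E$ converging to $(y,1)$, which forces $\ell(y)=0$.

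To get uniformity, I would consider the two definable bounded functions on $[0,1[$
$$\phi(t) := \sup_{x \in \Cal{C}_t} {\rm dist}(x,\Cal{C}_1), \qquad \psi(t) := \sup_{y \in \Cal{C}_1} {\rm dist}(y,\Cal{C}_t),$$
each of which has a one-sided limit at $t=1$ by the monotonicity theorem. Suppose, for contradiction, that $\lim_{t \to 1^-}\phi(t) = \varepsilon_0 > 0$. Definable choice produces a definable curve $t \mapsto x(t) \in \Cal{C}_t$ realising the supremum up to a constant, so that ${\rm dist}(x(t),\Cal{C}_1) \geqslant \varepsilon_0/2$ near $1$. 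Each coordinate of $x(t)$ is a bounded definable function of $t$, hence has a limit at $1$, producing a point $x^\ast \in Q$. Since $(x(t),t) \in E$ and $(x(t),t) \to (x^\ast,1)$, we get $x^\ast \in \Cal{C}_1$, contradicting ${\rm dist}(x(t),\Cal{C}_1) \geqslant \varepsilon_0/2$. The case of $\psi$ is symmetric: extract a definable curve $t \mapsto y(t) \in \Cal{C}_1$ with ${\rm dist}(y(t),\Cal{C}_t)$ bounded below, let $y^\ast$ be its limit point in $\Cal{C}_1$, and use the pointwise fact $\ell(y^\ast)=0$ to reach a contradiction.

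The main obstacle is exactly this last step: without any monotonicity in $t$ at our disposal, both sides of the Hausdorff distance must be forced to shrink uniformly. The o-minimal ingredients doing the essential work are definable Skolem functions (to produce the witness curves) and the fact that a bounded definable curve defined on $[t_0,1[$ has a unique endpoint limit at $1$. Both would fail in a general metric setting, which is precisely why the definability hypothesis on the family is indispensable.
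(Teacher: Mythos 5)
Your argument is essentially correct, but there is nothing in the paper to compare it against: the statement is imported verbatim from the cited references \cite{Br}, \cite{vdD2}, \cite{LS} and no proof is given here. What you have written is a sound, self-contained proof of the one-parameter case. The candidate $\Cal{C}_1$ as the fibre over $t=1$ of $\clos(E)$ inside $Q\times[0,1]$ is the standard one, and definability plus the inclusion in $Q$ are indeed automatic; the pointwise claim $\ell(y)=0$ for $y\in\Cal{C}_1$ is correct; and the uniformity step, combining the definable functions $\phi,\psi$, definable choice, and the existence of endpoint limits of bounded definable curves, legitimately forces both halves of the Hausdorff distance to zero. Two small points you should dispatch explicitly: (i) the degenerate case where $\Cal{C}_t=\emptyset$ for $t$ arbitrarily close to $1$ --- by o-minimality of $\{t:\Cal{C}_t=\emptyset\}$ this means $\Cal{C}_t=\emptyset$ on an interval $]a,1[$, whence $\Cal{C}_1=\emptyset$ and the distance functions you write down are otherwise undefined; and (ii) the fact that $\Cal{C}_1$ is compact (it is, as a fibre of the compact set $\clos(E)$), which you use implicitly when extracting the limit $y^\ast\in\Cal{C}_1$ in the $\psi$ step. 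By way of comparison with the sources the paper actually relies on: \cite{LS} and \cite{vdD2} prove substantially more, namely that the closure in $\Cal{K}(\Rn)$ of a definable (multi-parameter) family of compacta adds only finitely many, respectively a definable family of, limit sets --- and it is this stronger finiteness statement, not merely the existence of one-sided limits, that the paper invokes later to bound the number of discontinuities of $|K|$ in Theorem \ref{thmCCF1}. Your elementary route buys a short proof of exactly the displayed statement, but it would not by itself substitute for the citations where the finiteness of exceptional limits is needed.
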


Let $(\Cal{C}_t)_{t \in I}$ be a definable family of closed subsets of $\Sr^{n-1}$. Since
there exists a constant $C$, such that given any $u,v \in \Sr^{n-1}$, $\ud_\Sr (u,v)\leqslant C |u-v|$, 
taking Hausdorff limits of closed subsets $(\Cal{C}_t)_{t \in I}$ of $\Sr^{n-1}$, will provide exactly
the same Hausdorff limits of $(\Cal{C}_t)_{t\in I}$ considered as closed subsets of $\R^n$.

\medskip
Let us return to our topic.
Given a value $c$ let us denote respectively by  $\Scr{V}_c^+$ and
by $\Scr{V}_c^-$ the following Hausdorff limits

\medskip
\begin{center}
$\Scr{V}_c^+ := \lim_{t\rightarrow c^+} \clos(\Cal{U}_t)$ and $\Scr{V}_c^- :=\lim_{t\rightarrow c^-}
\clos(\Cal{U}_t)$.
\end{center}

\medskip
For each $k = 1,\ldots,N_f$ and for $*=+,-$, let $\Scr{V}_k^*$ be
the Hausdorff limit $\lim_{t\rightarrow c^*} \clos(\Cal{U}_{k,t})$.

\begin{proposition}\label{propHLGI1}
Let $U_1,\ldots,U_{d_c}$ be the  connected components of $\Cal{U}_c$. For each $i=1,\ldots,d_c,$  there
exist positive integers $l_- = l_- (i) \geqslant s(i)$ and $l_+ = l_+ (i) \geqslant s(i)$ such that $U_i
\cap \Scr{V}_c^- \subset \Scr{V}_{l_-}^-$ and $U_i \cap \Scr{V}_c^+ \subset \Scr{V}_{l_+}^+$.
\end{proposition}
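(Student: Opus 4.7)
The plan is first to reduce the problem to producing an integer $l_+ \geq s(i)$ with $U_i \subset \Scr{V}_{l_+}^+$. To that end, I would begin by establishing $U_i \subset \Scr{V}_c^+$ (so that $U_i \cap \Scr{V}_c^+ = U_i$): for any $u \in U_i$ the preimage set $\Psi_f^{-1}(u, c) = \{x_1, \ldots, x_{s(i)}\}$ lies in the domain of $\Psi_f$, where $\Psi_f$ is a local diffeomorphism; hence each $x_j$ admits an open neighborhood mapping diffeomorphically onto a common open neighborhood $W$ of $(u, c)$ in $\Sr^{n-1} \times \R$. For $t$ slightly larger than $c$ we have $(u, t) \in W$, so $u \in \Cal{U}_t$, and the constant sequence $u_n = u$, $t_n = c + 1/n$ witnesses $u \in \Scr{V}_c^+$. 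The ``$-$'' side is handled identically by letting $t \to c^-$.

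To produce a single $l_+$, I would next invoke Hardt's definable triviality theorem, applied to $\widetilde{\Cal{U}} \cap (\Sr^{n-1} \times (c, c+\epsilon))$ sliced by $t$, together with its partition into the strata $\widetilde{\Cal{U}}_k$; this gives, for $\epsilon > 0$ small enough, a definable homeomorphism trivializing this family over $(c, c+\epsilon)$ and respecting each stratum. As a consequence, for $t \in (c, c+\epsilon)$ the connected components of $\Cal{U}_t$ carry intrinsic ``labels'' independent of $t$, and each component is entirely contained in a single $\Cal{U}_{k, t}$ with a well-defined value of $k$.

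The heart of the argument is then to show that all of $U_i$ corresponds to one and the same label. Given any two $u, u' \in U_i$, I would fix a continuous path $\gamma : [0, 1] \to U_i$ joining them; by compactness of $\gamma([0,1])$ combined with the local-extension argument above, there is $\delta_\gamma > 0$ with $\gamma([0,1]) \subset \Cal{U}_t$ for every $t \in (c, c + \delta_\gamma)$. Connectedness of $\gamma([0,1])$ then forces $u$ and $u'$ to sit in the same connected component of $\Cal{U}_t$, and hence to share the same label under the trivialization. Running this for every pair yields a single common label $j$ for all of $U_i$, with associated integer $l_+$; the cardinality of $\Psi_f^{-1}$ on the corresponding component equals $l_+$ but is at least $s(i)$ by the first point, so $l_+ \geq s(i)$. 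For each $u \in U_i$ and each $t$ slightly greater than $c$ (so that $u \in \Cal{U}_{l_+, t}$), the constant sequence then gives $u \in \Scr{V}_{l_+}^+$, whence $U_i \subset \Scr{V}_{l_+}^+$. The main obstacle is securing the Hardt trivialization compatibly with the stratification $\{\widetilde{\Cal{U}}_k\}$, since the preservation of the integer fiber cardinality along each component is precisely what allows a single $l_+$ to be attached to the common label $j$; without it, different points of $U_i$ could fall into different $\Scr{V}_k^+$'s and the conclusion would break down.
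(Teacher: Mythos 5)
Your proof is correct in substance, and its first half is the paper's argument in different clothing: the paper also shows that each of the $s(i)$ points of $\Psi_f^{-1}(u,c)$ persists to nearby levels, phrasing this via the definable curve $\Psi_f^{-1}(\{u\}\times\R)$ and the strict monotonicity of $f$ along each of its connected components --- which is exactly your local-diffeomorphism observation read along the fibre direction. Where you genuinely diverge is in how the single integer $l_{\pm}(i)$ is extracted: the paper dispatches this in one sentence (``the number of connected components \dots depends only on $U_i$''), whereas you bring in Hardt's definable trivialization compatible with the strata $\widetilde{\Cal{U}}_k$ together with a path-connectedness argument. Two remarks on that. First, the crucial input there is not Hardt's theorem but the fact, asserted earlier in the paper, that the sets $\Cal{U}_{k,t}$ are open and partition $\Cal{U}_t$, so that each connected component of $\Cal{U}_t$ carries a single value of $k$; Hardt alone does not give you ``each component is entirely contained in a single $\Cal{U}_{k,t}$''. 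Once you grant that openness, the trivialization becomes dispensable: your path argument already yields $\#\Psi_f^{-1}(u,t)=\#\Psi_f^{-1}(u',t)$ for all $t\in(c,c+\delta_\gamma)$, and the constancy in $t$ of this common value follows either from the connectedness of the segment $\{u\}\times(c,c+\delta_u)$ inside the open set $\widetilde{\Cal{U}}$ or from o-minimality of $\{t:(u,t)\in\widetilde{\Cal{U}}_k\}$. Second, be careful with the phrase ``a single common label $j$ for all of $U_i$'': a fixed $u$ need not lie in the same labelled component for different $t$ merely because the family is trivial over $(c,c+\epsilon)$, since the trivializing homeomorphism moves points; the $t$-independence of the integer attached to $u$ needs the small extra step just indicated. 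With these points made explicit your proof is complete and, if anything, more careful than the paper's own treatment of the uniformity of $l_{\pm}$ over $U_i$.
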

\begin{myproof}
Let $u \in \Sr^{n-1}$. When not empty, the subset

\medskip
\begin{center}
$\Psi_f^{-1}(\{u\}\times \R) =\{x \notin \crit (f) \cup \crit(\nuf) : \nuf (x) = u\}$
\end{center}
is a $C^{l-1}$ definable curve since $\Psi_f$ is a local diffeomorphism.
Let $\Gamma$ be a connected component of $\Psi_f^{-1}(\{u\})$ which intersects with $F_c$. Since
the function $f|_\Gamma$ is strictly monotonic, there exists $\varepsilon >0$ such that for any $t \in
]c-\varepsilon,c[\cup]c,c+\varepsilon[$, the curve $\Gamma$ also intersects with $F_t$, that is $u \in
\Cal{U}_t$. Since the number of connected components of $\Psi_f^{-1}(\{u\}\times \R)$ meeting $F_c$
does depend only on $U_i$, the Proposition is proved.
\end{myproof}

As a consequence of Proposition \ref{propHLGI1} we obtain

\begin{corollary}\label{corHLGI1}
Let $c$ be a value such that $\Cal{U}_c \neq \emptyset$. Then $\Cal{U}_c \subset \Scr{V}_c^+ \cap
\Scr{V}_c^-$.
\end{corollary}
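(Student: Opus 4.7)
The statement follows almost immediately from the proof of Proposition \ref{propHLGI1}, so the plan is essentially to extract the pointwise content of that argument and repackage it.

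First I would take an arbitrary point $u \in \Cal{U}_c$. By the definition of $\Cal{U}_c$, there exists $x \in F_c \setminus \crit(\nu_c)$, equivalently $x \notin \crit(f) \cup \crit(\nuf)$, with $\nuf(x) = u$, so that $\Psi_f(x) = (u,c)$. Since $\Psi_f$ is a local $C^{l-1}$ diffeomorphism at every point of its domain, in particular at $x$, the preimage $\Psi_f^{-1}(\{u\}\times \R)$ contains a definable $C^{l-1}$ curve $\Gamma$ through $x$ on which $\nuf \equiv u$. Along $\Gamma$, the function $f$ is strictly monotonic near $x$ (again because $\Psi_f$ is a local diffeomorphism onto an open neighborhood of $(u,c)$ in $\Sr^{n-1}\times\R$).

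Consequently there exists $\varepsilon>0$ such that for every $t \in \; ]c-\varepsilon,c[\,\cup\,]c,c+\varepsilon[$, the curve $\Gamma$ meets $F_t$ at a point $x_t \notin \crit(f)\cup\crit(\nuf)$ with $\nuf(x_t) = u$. In other words $u \in \Cal{U}_t \subset \clos(\Cal{U}_t)$ for all such $t$.

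Finally, to conclude $u \in \Scr{V}_c^+ \cap \Scr{V}_c^-$, I would use the characterisation of the Hausdorff limit: for any sequence $t_n \to c^\pm$, I need to produce $u_n \in \clos(\Cal{U}_{t_n})$ with $u_n \to u$. For $n$ large enough, $t_n$ lies in the one-sided punctured neighborhood produced above, so I may simply take the constant sequence $u_n = u$. Hence $u \in \Scr{V}_c^+$ and $u \in \Scr{V}_c^-$, which proves the inclusion $\Cal{U}_c \subset \Scr{V}_c^+\cap \Scr{V}_c^-$. There is no real obstacle here; the only point demanding care is the persistence of $u$ as a regular value of $\nu_t$ for $t$ near $c$, and this is exactly what the local diffeomorphism property of $\Psi_f$ at $x$ supplies.
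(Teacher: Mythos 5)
Your argument is correct and is essentially the paper's own proof: the author simply remarks that the corollary ``is contained in the proof of Proposition \ref{propHLGI1}'', and what you have written is precisely the pointwise content of that proof (the curve $\Gamma$ in $\Psi_f^{-1}(\{u\}\times\R)$ through a preimage of $(u,c)$, monotonicity of $f|_\Gamma$, hence $u\in\Cal{U}_t$ for all $t$ in a punctured neighborhood of $c$, hence $u$ lies in both one-sided Hausdorff limits). No gaps; the final step identifying membership in $\Scr{V}_c^{\pm}$ via the constant sequence is exactly right.
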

\begin{myproof}
It is contained in the proof of Proposition \ref{propHLGI1}.
\end{myproof}

The statement of Corollary \ref{corHLGI1} would be wrong for a general definable family $(\Cal{C}_t)_t$ of
closed subsets of $\Sr^{n-1}$. As read in the proof, the transverse structure given by the function $f$ is
somehow also carried in the family $(\Cal{U}_t)_t$, and thus explain this result.

We end this section with an elementary result about the volume of a Hausdorff
limit of a $1$ parameter definable family.

\begin{proposition}\label{propHLGI2}
Let $\Cal{C} : = \cup_{t \in [0,1[}\Cal{C}_t$ be a definable family
of closed connected subsets of dimension $n$ of $Q \in \Rn$ or $\Sr^n$, a compact subset.
Let $\Cal{C}_1$ be the Hausdorff limit $\lim_{t\rightarrow 1} \Cal{C}_t$, then
\begin{center}
$\lim_{t\rightarrow 1} \vol_n (\Cal{C}_t) = \vol_n (\Cal{C}_1)$.
\end{center}
\end{proposition}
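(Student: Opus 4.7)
The plan is to prove the two inequalities $\limsup_{t \to 1^{-}} \vol_n(\Cal{C}_t) \leq \vol_n(\Cal{C}_1)$ and $\vol_n(\Cal{C}_1) \leq \liminf_{t \to 1^{-}} \vol_n(\Cal{C}_t)$ separately. The upper bound follows quickly from Hausdorff convergence; the lower bound is the main obstacle and requires the o-minimal cell structure.

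For the upper bound, given $\epsilon > 0$, Hausdorff convergence yields $\delta > 0$ such that $\Cal{C}_t \subset N_\epsilon(\Cal{C}_1)$, the open $\epsilon$-neighborhood, for all $t \in [1-\delta, 1)$; hence $\vol_n(\Cal{C}_t) \leq \vol_n(N_\epsilon(\Cal{C}_1))$. Since $\Cal{C}_1$ is closed and contained in the bounded set $Q$, one has $\bigcap_{\epsilon > 0} N_\epsilon(\Cal{C}_1) = \Cal{C}_1$, and monotone continuity of Lebesgue measure from above gives $\vol_n(N_\epsilon(\Cal{C}_1)) \downarrow \vol_n(\Cal{C}_1)$ as $\epsilon \downarrow 0$. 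Taking $\limsup$ first in $t$ and then $\epsilon \to 0^{+}$ yields the upper bound.

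For the lower bound, introduce the closed definable set $\Cal{C}^{*} := \clos\left( \bigcup_{t \in [0,1[} \Cal{C}_t \times \{t\} \right) \subset Q \times [0,1]$, whose fiber over $t = 1$ is precisely $\Cal{C}_1$ by the definable characterization of the Hausdorff limit. Apply cylindrical cell decomposition of $Q \times [0,1]$ compatible with $\Cal{C}^{*}$ and with the projection onto the $t$-axis. This partitions $[0,1]$ into finitely many points and open intervals, and produces a $\delta > 0$ such that over $(1-\delta, 1)$ the set $\Cal{C}^{*}$ is a disjoint union of finitely many $(n+1)$-dimensional cylindrical cells $C_1, \ldots, C_r$ together with lower-dimensional pieces contributing nothing to $\vol_n$. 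By the cylindrical form of each cell, the slice-volume function $t \mapsto \vol_n((C_j)_t)$ is continuous on $(1-\delta, 1]$, established by induction on dimension (each slice is an iterated integral of differences of continuous definable functions in $t$).

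To finish, decompose the fiber at $t = 1$ as $\Cal{C}_1 = \bigcup_j (\overline{C_j})_1 \cup R$, where $R$ denotes the union of $n$-dimensional cells of the decomposition lying entirely in the slice $\{t = 1\}$ and not in the closure of any $C_j$. By the Hausdorff-limit characterization, every point of $\Cal{C}_1$ must be a limit of points of $\Cal{C}_t$ for $t < 1$; this forces $R$ to lie inside $\overline{\bigcup_j C_j}$, and a cell lying purely at $t = 1$ outside this closure cannot satisfy the condition, whence $\vol_n(R) = 0$. Combined with continuity of each slice volume, $\vol_n(\Cal{C}_1) = \sum_j \vol_n((\overline{C_j})_1) = \sum_j \lim_{t \to 1^{-}} \vol_n((C_j)_t) = \lim_{t \to 1^{-}} \vol_n(\Cal{C}_t)$. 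The main obstacle is precisely this last step of controlling the fiber $R$ at $t = 1$, for which the rigidity of definable Hausdorff limits is crucial.
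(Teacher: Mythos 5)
Your proposal is correct in outline but takes a genuinely different route from the paper's. The paper works entirely on the limit set: it invokes the definable Weyl--Steiner tube formula of Br\"ocker--Kuppe to compare $\vol_n(T_\varepsilon(\Cal{C}_1))$ with $\vol_r(\Cal{C}_1)$, where $r=\dim \Cal{C}_1$, and combines this with the inclusion $\Cal{C}_t\subset T_\varepsilon(\Cal{C}_1)$ furnished by Hausdorff convergence; this settles the case $r<n$ and yields $\limsup_{t\fl 1}\vol_n(\Cal{C}_t)\leqslant \vol_n(\Cal{C}_1)$, which is exactly your first inequality (your monotone-continuity-from-above argument replaces the tube formula by elementary measure theory, and is simpler). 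Your treatment of the reverse inequality via a cell decomposition of the total space $\clos\bigl(\cup_{t}\Cal{C}_t\times\{t\}\bigr)$ is a different mechanism altogether, and it addresses precisely the direction that the paper's proof passes over most quickly, so it is the more instructive half of your write-up: what each approach buys is that the paper's is short and uniform in $r$, while yours isolates where o-minimality (uniform finiteness of the cell structure) forbids the limit set from being ``fatter'' than the approximating fibers.

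Two points in your sketch need to be written out before the lower bound is complete. First, in discarding the remainder $R$: a point of the fiber of $\Cal{C}^*$ over $t=1$ is a limit of points lying in the closures of \emph{all} cells over $(1-\delta,1)$, not only the $(n+1)$-dimensional ones $C_j$; you then need the frontier-dimension inequality $\dim(\clos(\tau)\setminus\tau)<\dim\tau$ to conclude that the cells of dimension $\leqslant n$ contribute a set of $n$-volume zero to the fiber at $t=1$, after which your conclusion $\vol_n(R)=0$ holds. Second, the assertion that $t\mapsto\vol_n((C_j)_t)$ extends continuously to $t=1$ with value $\vol_n((\clos(C_j))_1)$ is essentially the proposition itself for a single cell; the induction must show both that the iterated integrals converge (the monotonicity theorem gives existence of the one-sided limits of the bounding functions, and dominated convergence applies since everything sits in the compact $Q$) and that $(\clos(C_j))_1$ agrees, up to a set of $n$-volume zero, with the pointwise limit band of the slices --- again a frontier-dimension statement. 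With those details supplied, your argument is sound.
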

\begin{myproof}
According to \cite[Theorem 1]{LS} there exist at most finitely many subsets of $\Cal{K}(\R^n)$ that belong
to $\clos (\Cal{C}) \setminus \Cal{C} \subset \Cal{K}(\R^n)$. By \cite[Proposition 3,2]{vdD2}), $\Cal{C}_1$
is well defined and definable, and its dimension is at most $n$. Note that $\Cal{C}_1$ is necessarily
connected.

\medskip
\noindent
Assume first that $Q \subset \Rn$. \\
Let $T_\varepsilon (\Cal{C}_t)$ be the  $\varepsilon$-neighborhood of $\Cal{C}_t$
in $\Rn$ for $\varepsilon >0$.

Let $r$ be the dimension of $\Cal{C}_1$. Using a generalized Weyl-Steiner's tube formula for compact
definable subsets of $\Rn$ (\cite{BK}), we deduce that for small $\varepsilon >0$ there exists a positive
constant $L$ such that

\medskip
\begin{center}
$\displaystyle{\left |\vol_r (\Cal{C}_1) - \frac{\vol_n (T_\varepsilon\Cal{C}_1)}{\sigma_{n-r,n}
\varepsilon^{n-r}} \right |  < L \varepsilon}$,
\end{center}

where $\sigma_{n-r,n}$ is the volume of the unit ball $\B_{n-r}$ of $\R^{n-r}$.

\medskip
By definition of $\Cal{C}_1$, for any $\varepsilon >0$, there exists $\eta = \eta(\varepsilon)$
such that for $t>1-\eta$ we get $\ud_{\Cal{K}(\R^n)}(\Cal{C}_t,\Cal{C}_1) < \varepsilon$, that is
$\Cal{C}_t \subset T_\varepsilon (\Cal{C}_1)$.

\medskip
If $r=n$,  we deduce that $\vol_n (\Cal{C}_t) - \vol_n (\Cal{C}_1) \fl 0$ as $\varepsilon \fl 0$,
since $\Cal{C}_t \subset T_\varepsilon (\Cal{C}_1)$. \\
When $r<n$ we know $\vol_n (T_\varepsilon(\Cal{C}_1)) \fl 0$ as $\varepsilon \fl 0$. So $\vol_n
(\Cal{C}_t)$ must tend to $0$ as $t \fl 1$ (otherwise $\limsup_{t\fl 1} \vol_n (\Cal{C}_t) >0$, which would
contradict $\lim_{\varepsilon \fl 0}\vol_n (T_\varepsilon(\Cal{C}_1)) = 0$).

\medskip
Assume now $Q \subset \Sr^n$. The generalized Weyl's tube formula (\cite{BB},\cite{BK}) works also in
spaces of constant curvatures. Thus the proof above in the flat case, adapts almost readily to the case of
$\Sr^n$.
\end{myproof}
%
%
%
%
%
%
%
%
%
%
%
%
%
%
\section{Continuity of the total curvature and of the total absolute curvature of a definable
function}\label{sectionCCF}

We use the notations of Section \ref{sectionGMTF}.

\medskip
Given $t\notin K_0 (f)$, let $k_t$ be the Gauss curvature of $F_t$ with respect to the orientation $\nu_t$.
Let $\Cal{E}_t$ be the set of the connected components of $F_t$. \\
When $t \notin K_0 (f)$, we define the {\it total absolute curvature}  of $F_t$ as

\smallskip
\begin{center}
$|K|(t) :=\sum_{E \in \Cal{E}_t} |K|_E $,
\end{center}

\medskip
\noindent
 and the {\it total curvature} of $F_t$ as

\smallskip
\begin{center}
$K(t) := \sum_{E \in \Cal{E}_t} K_E$.
\end{center}

\medskip
Assume $c$ is a value taken by $f$ such that $\Cal{U}_c$ is not empty. Let $U_1,,\ldots,U_{d_c},$ be the
connected components of $\Cal{U}_c$. For each $i=1,\ldots,d_c$, let  $s(i)$ be the number of points in the
fiber $\nu_c^{-1} (u)$ above any $u \in U_i$, that is $U_i \subset \Cal{U}_{s(i),c}$, and let $\sigma_i (c)$ 
be the degree of $\nu_c$ on $U_i$ (see Section \ref{sectionCACH}). We recall that

\begin{equation}\label{eqnTAC}
|K|(c) = \sum_{l=1}^{N_f} l\cdot \vol_{n-1} (\clos(\Cal{U}_{l,c})) = \sum_{i=1}^{d_c} s(i) \vol_{n-1}
(\clos(U_i)).
\end{equation}
\begin{equation}\label{eqnTC}
K(c) =  \sum_{i=1}^{d_c} \sigma_i (c)\vol_{n-1} (\clos(U_i)) \hfill
\end{equation}

As a consequence of the result of the existence of definable Hausdorff Limits (\cite{vdD2},\cite{LS}) and
of the definition of the total curvature we deduce the following
\begin{corollary}\label{corCCF1}
The following limits, 
\begin{center}
\smallskip
$\displaystyle{\lim_{t \fl c^-}K(t), \; \lim_{t \fl c^+}K(t), \; \lim_{t \fl c^-}|K|(t)}$ and $
\displaystyle{\lim_{t \fl c^+}|K|(t)}$ 
\smallskip
\end{center}
do exist, once given a value $c$.
\end{corollary}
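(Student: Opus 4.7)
The plan is to combine three ingredients: definable trivialization to stabilize the combinatorial structure of the family $(\Cal{U}_t)_t$ near $c$, local constancy of the tangent Gauss degree from Proposition \ref{propGMTF2}, and volume convergence of Hausdorff limits from Proposition \ref{propHLGI2}.

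First I would apply definable trivialization (cell decomposition in the $t$ variable) simultaneously to $\widetilde{\Cal{U}}$, to each of the refinements $\widetilde{\Cal{U}}_k$ for $k = 1, \ldots, N_f$, and to the locally constant degree function $\deg_f$ on $\widetilde{\Cal{U}}$. This produces $\delta > 0$ such that on each of the one-sided intervals $I_- = (c-\delta, c)$ and $I_+ = (c, c+\delta)$ the family $(\Cal{U}_t)_t$ is definably trivial; in particular the number $d_\pm$ of connected components of $\Cal{U}_t$ is constant for $t \in I_\pm$, and one can label them coherently as $U_1^\pm(t), \ldots, U_{d_\pm}^\pm(t)$ so that each $(U_i^\pm(t))_{t \in I_\pm}$ is a definable family of connected open subsets of $\Sr^{n-1}$ on which both the multiplicity $s_i^\pm := \#\nu_t^{-1}(u)$ (for $u \in U_i^\pm(t)$) and the degree $\sigma_i^\pm := \deg(\nu_t|_{U_i^\pm(t)})$ are independent of $t$.

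Next, for each fixed $i$, $(\clos(U_i^\pm(t)))_{t \in I_\pm}$ is a definable family of closed connected subsets of the compact sphere $\Sr^{n-1}$, each of top dimension $n-1$, so Proposition \ref{propHLGI2} produces a Hausdorff limit $\Scr{W}_i^\pm \subset \Sr^{n-1}$ with
\[
\lim_{t \to c^\pm} \vol_{n-1}(\clos(U_i^\pm(t))) = \vol_{n-1}(\Scr{W}_i^\pm).
\]
Then, for every $t \in I_\pm$, the identities \eqref{eqnTAC} and \eqref{eqnTC} read
\[
K(t) = \sum_{i=1}^{d_\pm} \sigma_i^\pm \, \vol_{n-1}(\clos(U_i^\pm(t))), \qquad |K|(t) = \sum_{i=1}^{d_\pm} s_i^\pm \, \vol_{n-1}(\clos(U_i^\pm(t))),
\]
with the coefficients $\sigma_i^\pm$ and $s_i^\pm$ constant in $t$, so passing to the limit term by term on each side yields the four one-sided limits asserted. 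The trivial case $\Cal{U}_t = \emptyset$ on some $I_\pm$ simply gives $K(t) = |K|(t) = 0$ there, with limit zero.

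The one delicate point is the simultaneous stabilization in the first step: a single $\delta$ must work for $\widetilde{\Cal{U}}$, for every $\widetilde{\Cal{U}}_k$, and for the degree function at once, together with a coherent labeling of components across $t \in I_\pm$. This is however precisely the kind of finite refinement delivered by the cylindrical decomposition of a definable subset of $\Sr^{n-1} \times \R$ over a neighborhood of $t = c$, so no genuine obstacle arises.
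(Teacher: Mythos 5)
Your proof is correct and follows essentially the same route as the paper, which simply invokes the formulae \eqref{eqnTAC}--\eqref{eqnTC} together with Proposition \ref{propHLGI2}; you merely make explicit the definable trivialization over one-sided intervals that keeps the multiplicities $s_i^\pm$ and degrees $\sigma_i^\pm$ constant, a detail the paper leaves implicit but which is genuinely needed to justify the term-by-term limit for $K(t)$.
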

\begin{proof}
It is just a consequence of the formulae for the total curvatures given above and of Proposition
\ref{propHLGI2}.
\end{proof}

\medskip
Let us recall that given $* = -,+$, we denote by $\Scr{V}_c^*$, the
Hausdorff limit $\lim_{t\rightarrow c^*} \clos(\Cal{U}_t)$ and given
any integer $1 \leqslant k \leqslant N_f$, $\Scr{V}_k^*$ stands for
the Hausdorff limit $\lim_{t\rightarrow c^*} \clos(\Cal{U}_{k,t})$

\smallskip
We can write $\Scr{V}_c^- \cap \Scr{V}_c^+$ as the partition
$\cup_{0\leqslant k,l \leqslant N_f} \Scr{V}_{k,l}$, where
$\Scr{V}_{k,l} = \Scr{V}_k^- \cap \Scr{V}_l^+$, if $k,l$ are both
positive, $\Scr{V}_{k,0}$ stands for $\Scr{V}_k^-$ if the
intersection with $\Scr{V}_c^+$ is empty, $\Scr{V}_{0,l}$ stands for
$\Scr{V}_l^+$ if the intersection with $\Scr{V}_c^-$ is empty, and
$\Scr{V}_{0,0} = \emptyset$.

Our goal is still to try to understand the continuity of the
functions $|K|$ and $K$. Regarding the results of Sections
\ref{sectionCACH} and \ref{sectionHLGI}, the non zero contribution
to $|K|(t)$ only comes from the subset $\Cal{U}_t$.

\begin{proposition}\label{propCCF3}
Let $c$ be a value. Then
\begin{center}
$|K|(c) \leqslant \min\{\lim_{t\rightarrow c^-}|K|(t),\lim_{t\rightarrow c^+} |K|(t)\}$.
\end{center}
\end{proposition}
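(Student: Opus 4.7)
The plan is to rewrite both sides as sums weighted by the local sheet number and use the fact that the sheet number can only drop, not grow, when one passes from a nearby regular value $t$ to $c$.

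First, I would apply Proposition \ref{propHLGI2} (component by component) to each definable family $(\Cal{U}_{k,t})_t$, giving
\[
\lim_{t\to c^-}\vol_{n-1}(\clos(\Cal{U}_{k,t})) = \vol_{n-1}(\Scr{V}_k^-)
\quad\text{for each }k=1,\dots,N_f,
\]
and hence, via formula (\ref{eqnTAC}),
\[
\lim_{t\to c^-}|K|(t) = \sum_{k=1}^{N_f} k\cdot \vol_{n-1}(\Scr{V}_k^-),
\]
and likewise with $+$ in place of $-$.

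Next, I would use Corollary \ref{corHLGI1} and Proposition \ref{propHLGI1}: the open set $\Cal{U}_c$ sits inside $\Scr{V}_c^-\cap\Scr{V}_c^+$, and for each connected component $U_i$ of $\Cal{U}_c$ there are integers $l_-(i)\geqslant s(i)$ and $l_+(i)\geqslant s(i)$ with $U_i\subset\Scr{V}^-_{l_-(i)}$ and $U_i\subset\Scr{V}^+_{l_+(i)}$. Group the components by the value of $l_-(i)$: for a fixed $l$, the open sets $\{U_i : l_-(i)=l\}$ are pairwise disjoint and all contained in $\Scr{V}_l^-$, so (taking closures does not affect $(n-1)$-volume as the topological boundaries are definable of strictly smaller dimension)
\[
\sum_{i:\,l_-(i)=l} \vol_{n-1}(\clos(U_i)) \;\leqslant\; \vol_{n-1}(\Scr{V}_l^-).
\]

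Finally I would combine these ingredients with the formula (\ref{eqnTAC}) for $|K|(c)$ and the bound $s(i)\leqslant l_-(i)=l$ to obtain
\[
|K|(c) \;=\; \sum_l \sum_{i:\,l_-(i)=l} s(i)\,\vol_{n-1}(\clos(U_i)) \;\leqslant\; \sum_l l\cdot \vol_{n-1}(\Scr{V}_l^-) \;=\; \lim_{t\to c^-}|K|(t),
\]
and the identical argument with $l_+(i)$ gives the analogous bound with $\lim_{t\to c^+}$. Taking the minimum of the two yields the proposition. The only nontrivial point, and the one on which I would take some care, is the bookkeeping in the third step: several distinct components $U_i$ of $\Cal{U}_c$ may share the same limiting multiplicity $l_-(i)$ or $l_+(i)$, so the inequality $s(i)\leqslant l_\pm(i)$ must be combined with the disjointness of the $U_i$ to avoid overcounting when one compares $|K|(c)$ with the limiting total absolute curvature.
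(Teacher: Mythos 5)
Your proposal is correct and follows essentially the same route as the paper: the paper's proof likewise writes $|K|(c)=\sum_i s(i)\vol_{n-1}(U_i)$, invokes Proposition \ref{propHLGI1} (together with Corollary \ref{corHLGI1}) to bound this by $\sum_i l_*(i)\vol_{n-1}(U_i\cap\Scr{V}_c^*)\leqslant\sum_l l\,\vol_{n-1}(\Scr{V}_l^*)$, and then identifies the latter with $\lim_{t\to c^*}|K|(t)$ via Proposition \ref{propHLGI2}. Your explicit grouping of the components $U_i$ by the value of $l_\pm(i)$ is just a more careful spelling-out of the paper's one-line inequality, and your handling of the (trivial) case $\Cal{U}_c=\emptyset$ is subsumed by the empty-sum convention.
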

\begin{myproof}
If $\Cal{U}_c$ is empty this means that $|K|(c) = 0$, and so the
statement is trivially true.

Let $U_1,\ldots,U_{d_c}$ be the connected components of $\Cal{U}_c$. For each $i \in\{1,\ldots,d_c\}$ 
and each $* = +,-$, let $l_*(i)$ be the corresponding integer of Proposition \ref{propHLGI1}. Observe 
that for $*=+$ and $*=-$
\begin{center}
$|K|(c) = \sum_i s(i) \vol_{n-1} (U_i) \leqslant \sum_i l_*(i) \vol_{n-1} (U_i \cap \Scr{V}_c^*)$,
\end{center}
and the right hand side term is $\leqslant \sum_l l
\vol_{n-1} (\Scr{V}_l^*)$. By Proposition \ref{propHLGI2}, we
conclude the proof.
\end{myproof}

The continuity of $|K|$ at $c$ implies that each of these inequalities is an equality. Using notation of
Proposition \ref{propHLGI1} we actually obtain

\begin{corollary}\label{corCCF2}
If $|K|$ is continuous at $c$, then \\
(1) for each pair $k,l$ with $k\neq l$, $\vol_{n-1}(\Scr{V}_{k,l})
=0$, or equivalently $\Scr{V}_{k,l}$ is of dimension at most $n-2$.
\\
(2) for each $k$ and each $*=+,-$, $\vol_{n-1}(\Scr{V}_k^*) =
\vol_{n-1}(\Cal{U}_{k,c})$.
\end{corollary}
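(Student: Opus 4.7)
The approach is to revisit the inequality chain that underpins the proof of Proposition \ref{propCCF3}: continuity of $|K|$ at $c$ should force every step to be an equality, and both assertions will then drop out.

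First, I would apply Proposition \ref{propHLGI2} to each of the finitely many definable families $\bigl(\clos(\Cal{U}_{l,t})\bigr)_t$ to obtain $\lim_{t\to c^*}\vol_{n-1}(\clos(\Cal{U}_{l,t})) = \vol_{n-1}(\Scr{V}_l^*)$ for every $l$ and $*=\pm$, so that $\lim_{t\to c^*}|K|(t) = \sum_l l\,\vol_{n-1}(\Scr{V}_l^*)$. Using Corollary \ref{corHLGI1} to replace $U_i\cap\Scr{V}_c^*$ by $U_i$, the chain from the proof of Proposition \ref{propCCF3} reads
\[
|K|(c) = \sum_i s(i)\vol_{n-1}(U_i) \;\leqslant\; \sum_i l_*(i)\vol_{n-1}(U_i) \;\leqslant\; \sum_l l\,\vol_{n-1}(\Scr{V}_l^*) = \lim_{t\to c^*}|K|(t),
\]
where the middle step uses the inclusion $U_i\subset\Scr{V}_{l_*(i)}^*$ from Proposition \ref{propHLGI1} together with the pairwise disjointness of the $U_i$. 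Continuity at $c$ forces both inequalities to be equalities for each $*=\pm$.

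From the first equality, combined with the strict positivity of $\vol_{n-1}(U_i)$ and the bound $l_*(i)\geqslant s(i)$, I would deduce $l_-(i)=l_+(i)=s(i)$ for every component $U_i$. In particular, $\Cal{U}_{l,c} = \bigsqcup_{i:\, s(i)=l}U_i$ is contained in $\Scr{V}_l^*$ for both $*=\pm$. The second equality then reads $\sum_l l\,\bigl(\vol_{n-1}(\Scr{V}_l^*) - \vol_{n-1}(\Cal{U}_{l,c})\bigr) = 0$; each summand being nonnegative, each vanishes, which is exactly statement (2). For statement (1), the set $\Scr{V}_l^*\setminus\Cal{U}_{l,c}$ is definable and has $(n-1)$-volume zero, hence has dimension at most $n-2$; then for $k\neq l$, since $\Cal{U}_{k,c}$ and $\Cal{U}_{l,c}$ are disjoint by construction, I would write
\[
\Scr{V}_{k,l} = \Scr{V}_k^-\cap\Scr{V}_l^+ \subset (\Scr{V}_k^-\setminus\Cal{U}_{k,c})\cup(\Scr{V}_l^+\setminus\Cal{U}_{l,c}),
\]
concluding that $\Scr{V}_{k,l}$ has $(n-1)$-volume zero and hence dimension at most $n-2$.

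The only mild difficulty is the bookkeeping in passing from equality of weighted sums to term-by-term equality, which is legitimate precisely because every weight $l\geqslant 1$ is positive and each comparison $\vol_{n-1}(\Cal{U}_{l,c})\leqslant \vol_{n-1}(\Scr{V}_l^*)$ is nonnegative; the rest of the argument is essentially set-theoretic once the inclusions from Proposition \ref{propHLGI1} and Corollary \ref{corHLGI1} are in place.
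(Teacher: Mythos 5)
Your proposal is correct and follows essentially the same route as the paper, whose proof is only the one-line indication ``use Proposition \ref{propHLGI1}, equation \ref{eqnTAC} and Proposition \ref{propCCF3} and write down the continuity at $c$''; you have simply made explicit the term-by-term extraction from the equality of the weighted sums, together with the inclusions $U_i\subset\Scr{V}_{l_*(i)}^*$ and $\Cal{U}_c\subset\Scr{V}_c^+\cap\Scr{V}_c^-$ that the paper leaves implicit. The bookkeeping (positivity of $\vol_{n-1}(U_i)$, disjointness of the $U_i$, nonnegativity of each difference $\vol_{n-1}(\Scr{V}_l^*)-\vol_{n-1}(\Cal{U}_{l,c})$) is exactly what is needed and is handled correctly.
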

\begin{myproof}
Using Proposition \ref{propHLGI1}, equation \ref{eqnTAC} and
Proposition \ref{propCCF3}, and writing down the continuity at $c$
provides the different statements.
\end{myproof}

Now we can come to the first main result of this section
\begin{theorem}\label{thmCCF1}
The function $|K|$ has at most finitely many discontinuities.
\end{theorem}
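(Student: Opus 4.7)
The plan is to isolate a finite subset $T \subset \R$ outside of which $|K|$ is continuous, and conclude that the discontinuity set of $|K|$ is contained in $T$. In view of formula \eqref{eqnTAC}, $|K|(t) = \sum_{l=1}^{N_f} l \cdot \vol_{n-1}(\clos(\Cal{U}_{l,t}))$, so it suffices to establish that each of the functions $t \mapsto \vol_{n-1}(\clos(\Cal{U}_{l,t}))$, $l = 1, \ldots, N_f$, is continuous off a finite set.

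First I would apply the o-minimal triviality theorem (Hardt's theorem in its definable version, e.g.\ van den Dries) to the projection $\pi : \Sr^{n-1} \times \R \to \R$ restricted simultaneously to the definable sets $\widetilde{\Cal{U}}, \widetilde{\Cal{U}}_1, \ldots, \widetilde{\Cal{U}}_{N_f}$ and to their closures in $\Sr^{n-1} \times \R$. This produces finitely many points $t_1 < \ldots < t_N$, defining $T := \{t_1, \ldots, t_N\}$, such that over each open interval $I_i := (t_i, t_{i+1})$ (including the two unbounded ones) the family $(\clos(\Cal{U}_{l,t}))_{t \in I_i}$ is definably trivial for every $l$. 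On such an $I_i$, for any $c \in I_i$, the trivialization yields a Hausdorff-continuous map $t \mapsto \clos(\Cal{U}_{l,t})$, so the Hausdorff limits $\lim_{t \to c^{\pm}} \clos(\Cal{U}_{l,t})$ both exist and coincide with $\clos(\Cal{U}_{l,c})$.

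Next I would apply Proposition \ref{propHLGI2} connected-component by connected-component: the components of $\clos(\Cal{U}_{l,t})$ are matched bijectively across $I_i$ by the trivialization, while any components of dimension strictly less than $n-1$ contribute $0$ to the $(n-1)$-volume both along the family and in the limit. Summing the resulting convergences $\vol_{n-1}(\clos(\Cal{U}_{l,t})) \to \vol_{n-1}(\clos(\Cal{U}_{l,c}))$ with weights $l$ as in \eqref{eqnTAC}, we conclude $\lim_{t \to c} |K|(t) = |K|(c)$ for every $c \in \R \setminus T$. Hence $|K|$ has at most $\#T < \infty$ discontinuities.

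The main obstacle is the triviality step, where one has to arrange that the o-minimal trivialization can be carried out simultaneously for all strata $\widetilde{\Cal{U}}_l$ together with their closures, and that the resulting fibrewise identification is actually continuous in the Hausdorff metric on each $I_i$. Once this is granted — and it is compatible with Corollary \ref{corHLGI1}, which already forces the Hausdorff limits from both sides to contain $\Cal{U}_c$ — the rest reduces to the linearity of the formula \eqref{eqnTAC} and a direct application of Proposition \ref{propHLGI2}; Proposition \ref{propCCF3} is not needed for this particular statement, only for its refined corollaries.
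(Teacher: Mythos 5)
Your proposal is correct and follows the same skeleton as the paper's proof: reduce via \eqref{eqnTAC} to the continuity, off a finite set, of each $t \mapsto \vol_{n-1}(\clos(\Cal{U}_{l,t}))$; produce a finite exceptional set outside of which both one-sided Hausdorff limits of $\clos(\Cal{U}_{l,t})$ coincide with $\clos(\Cal{U}_{l,c})$; and finish with Proposition \ref{propHLGI2}. Where you genuinely diverge is in how the finite exceptional set is obtained. The paper regards $\Cal{C} = \cup_t \clos(\Cal{U}_{l,t})$ as a definable subset of $\Cal{K}(\R^n)$ and quotes Lion--Speissegger \cite{LS}: the closure of $\Cal{C}$ in the Hausdorff metric adds only finitely many compact sets, so only finitely many $c$ can have a one-sided Hausdorff limit different from $\Cal{C}_c$. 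You instead invoke definable (Hardt) trivialization of the projection $\Sr^{n-1}\times\R \to \R$ compatibly with the $\widetilde{\Cal{U}}_l$ and their closures. That route does work: over a trivializing interval the trivializing homeomorphism is uniformly continuous on compacta (the fibers live in the compact $\Sr^{n-1}$), which gives Hausdorff continuity of $t\mapsto\clos(\Cal{U}_{l,t})$, and one checks that over an interior point of such an interval the fiber of the closure equals the closure of the fiber. These are precisely the points you flag as the main obstacle, and they need a few more lines of verification than the paper's one-line appeal to \cite{LS}; in exchange, your argument does not rely on the definability-of-Hausdorff-limits machinery except through Proposition \ref{propHLGI2} itself. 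Your component-by-component bookkeeping is harmless (the paper applies Proposition \ref{propHLGI2} to $\clos(\Cal{U}_{l,t})$ directly), and you are right that Proposition \ref{propCCF3} is not needed here. One minor addition: include the finite set $K_0(f)$ of critical values in your exceptional set $T$, since $|K|$ is only defined and discussed at values in $f(\Rn)\setminus K_0(f)$ and the trivialization should be carried out over each connected component of that set, as the paper does.
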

\begin{myproof}
This is a consequence of Proposition \ref{propHLGI2} and Proposition \ref{propHLGI1}.

Assume $k$ is given. Let $\Cal{C} : = \cup_{t\in f(\R) \setminus K_0 (f)} \Cal{C}_t$, where
$\Cal{C}_t = \clos(\Cal{U}_{k,t})$.

Assume first that $f(\R) \setminus K_0 (f)$ is connected, so that we can assume that $f(\R)
\setminus K_0 (f) = ]0,1[$. Then $\Cal{C}$ is a definable family of subsets of $\Sr^{n-1}$, and then
is definable in $\Cal{K}(\R^n)$. Thus its closure $\clos(\Cal{C})$ in $\Cal{K}(\R^n)$ and consists of
$\Cal{C}$ and finitely many closed definable subsets of $\Sr^{n-1}$ by \cite[Theorem 1]{LS}.
This means there exist at most finitely many $c \in f(\R) \setminus K_0 (f) = ]0,1[$ such that the
Hausdorff limit $\lim_{t\rightarrow c^-}\Cal{C}_t$ or the Hausdorff limit $\lim_{t\rightarrow c^+}\Cal{C}_t$
is not $\Cal{C}_c$. For any value $c \in ]0,1[$ such that $\lim_{t\rightarrow c^-}\Cal{C}_t =
\lim_{t\rightarrow c^+}\Cal{C}_t = \Cal{C}_c$, Proposition \ref{propHLGI2} states that $t \to
\vol_{n-1}(\Cal{C}_t)$ is continuous at such a $c$. Since there are only finitely many $k$ and finitely
many connected components of $f(\R) \setminus K_0 (f)$, the theorem is proved.
\end{myproof}

\begin{remark}\label{rmkCCF1}
Working with globally subanalytic functions only, Theorem \ref{thmCCF1} is then just
a consequence of Lion-Rolin's Theorem \cite{LR}.
\end{remark}

Now let us investigate the continuity of $K$, $t \to K(t) = \int_{F_t} k_t$.

Let us pick a value $c$. Let $\{U_i\}_{i=1}^{d_c}$ be the set of connected components of $\Cal{U}_c$. We
recall that $\deg_u \nu_c$, the degree of $\nu_c$ at $u \in U_i$ is only dependent on $i$, and is equal to
$\sigma_i (c)$. Thus $K(c) = \sum_{i=1}^{d_c} \sigma_i (c) \vol_{n-1}(U_i)$.

\begin{proposition}\label{propCCF4}
If $|K|$ is continuous at $c$, then $K$ is continuous at $c$.
\end{proposition}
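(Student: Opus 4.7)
The strategy is to combine the rigidity forced by Corollary~\ref{corCCF2} with the local constancy of the tangent Gauss index (Proposition~\ref{propGMTF1}) in order to transfer signs across the limit $t \to c^\pm$.

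\textbf{Step 1 (rigidity of fibre multiplicities).} Let $U_1, \ldots, U_{d_c}$ be the connected components of $\Cal{U}_c$ and let $l_\pm(i) \geqslant s(i)$ be the integers supplied by Proposition~\ref{propHLGI1}. I would first show that $l_+(i) = l_-(i) = s(i)$ for every $i$. By Corollary~\ref{corHLGI1} together with Proposition~\ref{propHLGI1}, $U_i \subset \Scr{V}_{l_+(i)}^+ \cap \Scr{V}_{l_-(i)}^-$; moreover $U_i \subset \Cal{U}_{s(i),c}$ is disjoint from $\Cal{U}_{l_\pm(i),c}$ whenever $l_\pm(i) > s(i)$. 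A strict inequality would therefore force $\vol_{n-1}(\Scr{V}_{l_\pm(i)}^\pm) \geqslant \vol_{n-1}(U_i) + \vol_{n-1}(\Cal{U}_{l_\pm(i),c}) > \vol_{n-1}(\Cal{U}_{l_\pm(i),c})$, contradicting Corollary~\ref{corCCF2}(2). The same corollary, combined with $\Scr{V}_c^\pm = \bigcup_k \Scr{V}_k^\pm$ and $\Cal{U}_c \subset \Scr{V}_c^\pm$, then gives $\vol_{n-1}(\Scr{V}_c^\pm \setminus \Cal{U}_c) = 0$.

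\textbf{Step 2 (persistence of fibres and signs).} At each $x \in \nu_c^{-1}(u)$ with $u \in U_i$, the map $\Psi_f$ is a local diffeomorphism, so the $s(i)$ points of $\nu_c^{-1}(u)$ extend to $s(i)$ points of $\nu_t^{-1}(u)$ for $t$ close to $c$. Step~1 ensures that these continuations already exhaust $\nu_t^{-1}(u)$, so $u \in \Cal{U}_{s(i),t}$. Proposition~\ref{propGMTF1} keeps $\lambda_f$ constant along each continuation branch, so the pointwise signed count
\[
\sigma(u,t) := \sum_{x \in \nu_t^{-1}(u)} (-1)^{\lambda_f(x)}
\]
equals $\sigma_i(c)$ throughout $U_i$ for $t$ close enough to $c$.

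\textbf{Step 3 (vanishing leftover and conclusion).} Step~2 yields $\vol_{n-1}(\Cal{U}_c \cap \Cal{U}_t) \to \vol_{n-1}(\Cal{U}_c)$ as $t \to c^\pm$, while Proposition~\ref{propHLGI2} together with Step~1 gives $\vol_{n-1}(\Cal{U}_t) \to \vol_{n-1}(\Scr{V}_c^\pm) = \vol_{n-1}(\Cal{U}_c)$, so $\vol_{n-1}(\Cal{U}_t \triangle \Cal{U}_c) \to 0$. Splitting
\[
K(t) - K(c) = \int_{\Cal{U}_c \cap \Cal{U}_t} \bigl(\sigma(u,t) - \sigma(u,c)\bigr)\,du + \int_{\Cal{U}_t \setminus \Cal{U}_c} \sigma(u,t)\,du - \int_{\Cal{U}_c \setminus \Cal{U}_t} \sigma(u,c)\,du,
\]
Step~2 kills the first integral, while $|\sigma| \leqslant N_f$ bounds the other two by $N_f \cdot \vol_{n-1}(\Cal{U}_t \triangle \Cal{U}_c) \to 0$, giving $\lim_{t\to c^\pm} K(t) = K(c)$.

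The delicate point is Step~1: converting the global volume equality of Corollary~\ref{corCCF2} into the pointwise rigidity $l_\pm(i) = s(i)$ (together with the companion fact $\vol_{n-1}(\Scr{V}_c^\pm \setminus \Cal{U}_c) = 0$). Once this rigidity is secured, Steps~2 and~3 amount to standard bookkeeping based on Proposition~\ref{propGMTF1} and Proposition~\ref{propHLGI2}.
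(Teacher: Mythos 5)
Your proof is correct in substance and reaches the same conclusion by a genuinely different organization of the argument. The paper's proof works component by component in $\widetilde{\Cal{U}}$: it invokes the local constancy of the tangent Gauss degree $\deg_{(u,t)}\Psi_f$ (Propositions \ref{propGMTF2} and \ref{propGMTF3}) on a connected component $\widetilde{\Cal{V}}$ through $(u,c)$, and then uses the continuity of $|K|$ only to guarantee $\vol_{n-1}(U_t) \to \vol_{n-1}(U_c)$ for the slices of $\widetilde{\Cal{V}}$. You instead work pointwise in $u$: you transfer signs along the individual branches $\Gamma_j(u)$ using the local constancy of $\lambda_f$ (Proposition \ref{propGMTF1}) — which by Proposition \ref{propGMTF3} is the same information as the degree, but used fibrewise — and you control the fibre cardinality and the escaping mass explicitly through Corollary \ref{corCCF2} and the symmetric-difference estimate with the uniform bound $|\sigma|\leqslant N_f$. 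What your route buys is that it does not lean on the fibre count of $\Psi_f$ being locally constant in $(u,t)$ (the delicate point behind Proposition \ref{propGMTF2}, since $\Psi_f$ is a local diffeomorphism but not proper, so branches can escape to infinity as in Proposition \ref{propCCF5}); you derive that stability instead from the continuity of $|K|$, which is the honest source of it. Two spots need tightening. In Step 1, the displayed inequality presupposes $\Cal{U}_{l_\pm(i),c}\subset\Scr{V}_{l_\pm(i)}^\pm$, which Proposition \ref{propHLGI1} does not give directly; the clean version is to sum, $\sum_l l\,\vol_{n-1}(\Scr{V}_l^\pm) \geqslant \sum_i l_\pm(i)\vol_{n-1}(U_i) \geqslant \sum_i s(i)\vol_{n-1}(U_i) = |K|(c)$, and let the equality forced by continuity squeeze out $l_\pm(i)=s(i)$ for every $i$. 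In Step 2, the claim that the continued branches exhaust $\nu_t^{-1}(u)$ does not follow from Step 1 alone (membership in a Hausdorff limit of closures is not membership in $\Cal{U}_{s(i),t}$); it holds for almost every $u\in U_i$, and is obtained from the continuity of $|K|$ via $\lim_{t\to c^\pm}|K|(t)\geqslant \int \liminf_{t\to c^\pm}\#\nu_t^{-1}(u)\,\ud v(u)$, after which the threshold in $t$ still depends on $u$, so Step 3 should conclude by dominated convergence rather than by a uniform choice of $\varepsilon$. With those adjustments the argument is complete.
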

\begin{myproof}
If $|K|(c) = 0$, then we immediately get $K(c)=0$, and so $K$ is
continuous at $0$.

Assume that $\Cal{U}_c$ is not empty. Let $U_c$ be a connected component of $\Cal{U}_c$. Then for each
$u\in U$, the number of connected components of $\Psi_f^{-1}(\{u\}\times \R)$ meeting $F_c$ is constant,
say equal to $l \geqslant 1$. Let $\Gamma_1 (u),\ldots,\Gamma_l(u)$ be these connected components. Given $u
\in \Cal{U}_c$, we know that there exists $\varepsilon >0$ such that for any $t \in
]c-\varepsilon,c[\cup]c,c+\varepsilon[$, we get
\begin{center}
$\Psi_f^{-1}(\{u\}\times \R) \cap F_t = \cup_{i=1}^l(\Gamma_i (u)\cap F_t)$.
\end{center}

From Proposition \ref{propGMTF2}, we know that the tangent Gauss degree of $f$, that is the degree of
$\nu_t $ at $u$ is a locally constant function of $(u,t)$. Thus for $u \in \Cal{U}_c$ given, there is
$\varepsilon>0$ as above such that for any $t \in ]c-\varepsilon,\varepsilon[$, $\deg_u \nu_t = \deg_u
\nu_c$. Let $\widetilde{\Cal{V}}$ be the connected component of $\widetilde{\Cal{U}}$ that contains $(u,c)$. Thus
$\widetilde{\Cal{V}} \cap \Sr^{n-1}\times\{c\} = U_c \times\{c\}$ is connected. So there exists $\varepsilon
>0$ such that for any $t \in ]c-\varepsilon,c+\varepsilon[$, $\widetilde{\Cal{V}} \cap \Sr^{n-1}\times\{t\} =
U_t \times\{t\}$ is connected, so $U_t$ is connected. Since $|K|$ is continuous, this implies that $\lim_{t
\fl c} \vol_{n-1}(U_t) = \vol_{n-1}(U_c)$. Since the Gauss tangent degree is constant on $\widetilde{\Cal{V}}$,
this implies the continuity at $c$ of the function total curvature.
\end{myproof}

The converse of this result is not true as shown in the following
example: take $f(x,y) = y(2x^2y^2 -9xy +12)$ as a function on the
real plane. Then we find that
\begin{center}
$\lim_{t\rightarrow c^-} |K|(c) = \lim_{t\rightarrow c^+} |K|(c) =
2\pi$, while  $|K|(0)=0$,
\end{center}
but $K$ is continuous at $0$.

\begin{remark}\label{rmkCCF2}
Talking about the discontinuity of $|K|$ or $K$ makes only sense at
values $c$ that are taken by the function $f$.
\end{remark}

Let us return to the continuity of the curvature of $f$. Let $c$ be a regular value. For each 
$l =1,\ldots,N_f$, the Hausdorff limit of $\lim_{t\rightarrow c^*}\clos (\Cal{U}_{l,c})$ is denoted 
by $\Scr{V}_l^*$, where $* =+$ or $*=-$.

\begin{proposition}\label{propCCF5}
Let $c$ be a regular value at which $|K|$ is not continuous. There exists an open subset $U \subset
\Sr^{n-1}$, such that for any $u \in U$, there exists a connected component $\Gamma$ of $\Gamma_u^+ (f):=
\Psi_f^{-1}(\{u\}\times \R)$, such that $\Gamma \cap f^{-1}(c)$ is empty and one of the two cases below
happens

(i) If $c$ is the infimum of $f$ along $\Gamma$, then for any
$\varepsilon >0$ small enough, $\Gamma \cap
f^{-1}(]c,c+\varepsilon[)$ is not bounded.

(ii) If $c$ is the supremum of $f$ along $\Gamma$, then for any
$\varepsilon >0$ small enough, $\Gamma \cap
f^{-1}(]c-\varepsilon,c[)$ is not bounded.
\end{proposition}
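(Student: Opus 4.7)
The plan is to exploit the strict inequality in the proof of Proposition \ref{propCCF3}: if $|K|$ is discontinuous at the regular value $c$, the strictness must occur somewhere, and it localizes to an open subset of $\Sr^{n-1}$ where a one-sided Hausdorff limit detects fiber points of $\Psi_f$ that cannot be traced back to $F_c$. Without loss of generality I would assume $|K|(c)<\lim_{t\rightarrow c^+}|K|(t)$; the other side is handled identically and yields case (ii). The chain
$$|K|(c)=\sum_i s(i)\vol_{n-1}(U_i)\leqslant\sum_i l_+(i)\vol_{n-1}(U_i)\leqslant\sum_l l\vol_{n-1}(\Scr{V}_l^+)=\lim_{t\rightarrow c^+}|K|(t)$$
must be strict in at least one place. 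If the first inequality is strict, some $l_+(i)>s(i)$ and I set $U:=U_i$; if only the second is strict, some $\Scr{V}_l^+$ has $(n-1)$-volume exceeding the contribution of the $U_i$ with $l_+(i)=l$, and I take $U$ to be an open piece of $\Scr{V}_l^+\setminus\bigcup_i\overline{U_i}$.

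In either situation, for every $u\in U$, Proposition \ref{propHLGI1} and its proof provide at least one connected component $\Gamma$ of $\Psi_f^{-1}(\{u\}\times\R)$ that meets $F_t$ for $t\in\,]c,c+\varepsilon[$ with some $\varepsilon>0$ yet misses $F_c$. Because $\Psi_f$ is a local diffeomorphism on $\R^n\setminus(\crit(f)\cup\crit(\nuf))$, the function $f|_\Gamma$ is strictly monotonic and $\Gamma$ is parametrized diffeomorphically by its $f$-values. Since $\Gamma\cap F_c=\emptyset$ while these $f$-values accumulate at $c$ from above, $c$ must be the infimum of $f$ on $\Gamma$ and cannot be attained, which is case (i). Case (ii) follows analogously when the discontinuity comes from the $-$ side.

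To establish unboundedness, I would parametrize $\Gamma$ by $s\in\,]c,\sup_\Gamma f[$ via $f|_\Gamma$; since $\Gamma$ is definable, $\lim_{s\to c^+}\Gamma(s)$ exists in $\R^n\cup\{\infty\}$. If the limit were a finite point $x_0$, then $f(x_0)=c$ and, since $c\notin K_0(f)$, we have $x_0\notin\crit(f)$, so $\nuf$ is continuous at $x_0$ with $\nuf(x_0)=u$; in particular $x_0\in F_c$. If moreover $x_0\notin\crit(\nuf)$, the local diffeomorphism property of $\Psi_f$ at $x_0$ would force $\Gamma$ to extend through $x_0$, contradicting $\Gamma\cap F_c=\emptyset$. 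Hence $x_0\in\crit(\nuf)\cap F_c$ and $u\in\nuf(\crit(\nuf)\cap F_c)$, a definable subset of $\Sr^{n-1}$ of dimension at most $n-2$. Excising this lower-dimensional set from $U$ still leaves a non-empty open subset on which $\lim_{s\to c^+}|\Gamma(s)|=+\infty$, and hence $\Gamma\cap f^{-1}(\,]c,c+\varepsilon[)$ is unbounded for every small $\varepsilon>0$.

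The main obstacle is the first step, specifically the case in which strictness arises only from $\Scr{V}_l^+$ carrying mass outside $\overline{\Cal{U}_c}$: there one must combine the definability of the family $(\Cal{U}_{l,t})_t$ with Proposition \ref{propHLGI2} to locate an honestly open $U$ over which $l$ distinct components $\Gamma$ of $\Psi_f^{-1}(\{u\}\times\R)$ are present for every $t$ slightly greater than $c$ yet none of them reaches $F_c$. Once $U$ and $\Gamma$ are secured, the remainder of the argument is a routine monotonicity-plus-critical-value-exclusion analysis.
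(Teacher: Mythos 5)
Your argument follows the paper's own proof essentially step for step: you localize the discontinuity via the same dichotomy (an index jump $l_*(i)>s(i)$ versus extra $(n-1)$-volume in a one-sided Hausdorff limit $\Scr{V}_l^*$), extract from Proposition \ref{propHLGI1} the components $\Gamma$ of $\Psi_f^{-1}(\{u\}\times \R)$ that miss $F_c$, and rule out a finite limit point on $F_c$ using that $c$ is a regular value and that $\Psi_f$ is a local diffeomorphism off $\crit(f)\cup\crit(\nuf)$. Your excision of the codimension-$\geqslant 1$ set $\nu_c(\crit(\nu_c))$ is a slightly more explicit rendering of the paper's assertion that such a $u$ may be taken to be a regular value of the restricted Gauss map, but the approach is the same.
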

\begin{myproof}
Assume that $\lim_{t\rightarrow c^-}|K|(t) >|K|(c)$. The other
situation is absolutely similar.

Let $\Cal{U}_c = \sqcup_{i=1}^{d_c} U_i$, where  $U_i$ is a connected component of $\Cal{U}_c$. For
each $i$, we have $U_i \subset \Cal{U}_{s(i),c}$. Since $|K|(t) = \sum_{l=1}^{N_f} l \vol_{n-1}
(\Cal{U}_{l,t})$, we thus find that $\lim_{t\rightarrow c^-}|K|(t) = \sum_{l=1}^{N_f} l \vol_{n-1}
(\Scr{V}_l^-)$.

For each $i=1,\ldots,d_c$, there exists a positive integer $l(i) \geqslant s(i)$ such that $U_i \subset
\Scr{V}_{l(i)}^-)$. Thus the following is happening: \\
either \\
(a) there exists $i \in \{1,\ldots,d_c\}$ such that $l(i) > s(i)$ \\
or \\
(b) there exists $i \in \{1,\ldots,d_c\}$ such that $\vol_{n-1}
(\Scr{V}_{l(i)}^-) > \vol_{n-1} (U_i)$.

\smallskip
Assume that a phenomenon of type (a) contributes to the discontinuity of $|K|$ at $c$. Then for each $u \in
U_i$, there exists $\varepsilon >0$ such that $\Gamma_u^+ (f) \cap
f^{-1}(]c-\varepsilon,c+\varepsilon[)$ has $m(i)\geqslant l(i)$, connected components
$\Cal{G}_1,\ldots,\Cal{G}_{m(i)}$ such that, for $j = 1,\ldots,s(i)$, $f(\Cal{G}_j) =
]c-\varepsilon,c+\varepsilon[$, for $j = s(i)+1,\ldots,l(i)$, $f(\Cal{G}_j) = ]c-\varepsilon,c[$ and for $j
= l(i)+1,\ldots,m(i)$, $f(\Cal{G}_j) = ]c,c+\varepsilon[$.

Let $j \in \{s(i)+1,\ldots,l(i)\}$. If $\Cal{G}_j$ was bounded, its closure in $\Rn$ would then be 
$\clos (\Cal{G}_j) = \Cal{G}_j
\cup\{x_{c-\varepsilon}, x_c\}$, where $x_{c-\varepsilon} \in F_{c-\varepsilon}$ and $x_c \in F_c$. Since
$c$ is a regular value, this would mean that $\nf (x_c) = u |\nf (x_c)|$. Such a $u$ is a regular value of
$\nuf|_{f^{-1}(]c-\varepsilon,c+\varepsilon[)}$, thus $\Cal{G}_j$ could be extended to $\{f>c\}$ to a regular
curve through $x_c$, which would contradict $f(\Cal{G}_j) = ]c-\varepsilon,c[$. So we get that
$\Cal{G}_j$ never meet $f^{-1}(c)$. The same works for $j \in \{l(i)+1,\ldots,m(i)\}$.

\smallskip
Case (b) is proved similarly.

\smallskip
There is a last thing to say about the case $\Cal{U}_c$ is empty,
meaning that $\nu_c$ is of rank at most $n-2$. In this situation the
discontinuity of $|K|$ is created by phenomenon of type (b) only.
\end{myproof}

When the discontinuity of $|K|$ arises at a critical value $c$, it
is almost impossible to say anything similar to the previous
statement, in the general frame we are given.
%
%
%
%
%
%
%
%
%
%
%
%
%
\section{Total $\lambda$-curvature and total absolute $\lambda$-curvature}\label{sectionTLCTALC}
Given a compact connected manifold $M$ of dimension $m$, given a Morse function $g:M \mapsto \R$, let
$C_\lambda (g)$ be the set of critical points of index $\lambda$. The weak Morse inequalities state

\smallskip
\begin{center}
$\# C_\lambda (g) \geqslant b_\lambda (M)$, the $\lambda$-th Betti number.
\end{center}

\medskip
Let us consider now that $M$ is an orientable connected definable $C^l$, $l\geqslant 2$, hypersurface of
$\Rn$, with orientation map $\nu_M$.

Using Definition \ref{defCACH1}, given $\lambda \in \{0,\ldots,n-1\}$, let us define

\smallskip
\begin{center}
$\Cal{I}_M (\lambda) := \{x \in M : \lambda_M (x) = \lambda\}$.
\end{center}

\medskip
Since $\Cal{I}_M (\lambda) \subset M \setminus \crit (\nu_M)$, and $M \setminus \crit (\nu_M) \ni x \mapsto
\lambda_M (x)$ is definable
and locally constant, the subset $\Cal{I}_M (\lambda)$ is definable.

\medskip
Now we can define
\begin{definition}
Given $\lambda \in \{0,\ldots,n-1\}$, \\
(1) the total $\lambda$-curvature of $M$ is $K_M(\lambda) := \displaystyle{\int_{\Cal{I}_M (\lambda)} k_M
(x) \ud v (x)}$,

\smallskip
\noindent
(2) the total absolute $\lambda$-curvature of $M$ is $|K|_M(\lambda) :=
\displaystyle{\int_{\Cal{I}_M (\lambda)} |k_M| (x) \ud v (x)}$.
\end{definition}

If $M$ is compact then averaging on the restriction to $M$ of all the oriented linear projections 
we deduce

\begin{center}
 $|K|_M(\lambda)\geqslant {\rm vol}_{n-1} (\nu_M (\Cal{I}_M(\lambda))) \cdot b_\lambda (M)$.
\end{center}

\medskip
Now let us come to the case of definable functions. Let $f : \Rn \mapsto \R$ still be a $C^l$, 
definable function with $l\geqslant 2$. We use  the notations of Section \ref{sectionGMTF} and Section \ref{sectionHLGI}. Given any $t \notin
K_0 (f)$, let $\nu_t$ be the restriction of $\nu_f$ to $F_t$, and let $k_t (x)$
be the Gauss curvature at $x \in F_t$.

\medskip
Let us define the open definable subset

\medskip
\begin{center}
$\Cal{I}_f (\lambda):= \{x \notin  \crit (f) \cup \crit (\nuf): \mbox{ the index of } \ud_x
\nuf|_{T_xF_{f^{-1}(x)}} \mbox{ is } \lambda \}$
\end{center}

\medskip
Let $\Cal{E}_t$ still be the set of connected components of $F_t$.
Given $t \notin K_0 (f)$ and $\lambda \in \{0,\ldots,n-1\}$, let
\begin{center}
$\Cal{I}_t (\lambda) = \cup_{E \in \Cal{E}_t} \Cal{I}_E (\lambda) = \Cal{I}_f (\lambda) \cap F_t$.
\end{center}

\medskip
The family $(\nu_t(\Cal{I}_t (\lambda))_{t \notin K_0 (f)})$ is a definable family of subsets of $\Sr^{n-1}$.

\medskip
We define two new functions of $t$,

the {\it total $\lambda$-curvature} of $f$ :
\begin{center}
 $K(\lambda;t) := \sum_{E \in \Cal{E}_t} K_E (\lambda) = \displaystyle{\int_{\Cal{I}_t
(\lambda)} k_t (x) \ud v (x)}$,
\end{center}

and the {\it total absolute $\lambda$-curvature} of $f$ :
\begin{center}
$|K|(\lambda;t) := \sum_{E \in \Cal{E}_t} |K|_E (\lambda) = \displaystyle{\int_{\Cal{I}_t (\lambda)}|k_t| (x)
\ud v (x)}$.
\end{center}

Now we can state the main result of this section
\begin{theorem}
Let $\lambda \in \{1,\ldots,n-1\}$ be given. \\
(1) Let $c$ be a value. Then $\lim_{t \fl c^-}K(\lambda;t)$, $\lim_{t \fl c^+}K(\lambda;t)$, $\lim_{t \fl
c^-}|K|(\lambda;t)$ and $\lim_{t \fl c^+}|K|(\lambda;t)$ exist. \\
(2) Let $c$ be a value. Then
\begin{center}
$|K|(\lambda;t) \leqslant \min \{\lim_{t \fl c^-}|K|(\lambda;t),\lim_{t \fl
c^+}|K|(\lambda;t)\}$.
\end{center}
(3) The function $t \to |K|(\lambda;t)$ admits at most finitely many discontinuities. \\
(4) If the function $t \to |K|(\lambda;t)$ is continuous at $c$, then so is the function $t \to
K(\lambda;t)$.
\end{theorem}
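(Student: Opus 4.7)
The plan is to mimic Section \ref{sectionCCF} verbatim, with the Gauss map $\nu_t$ replaced by its restriction to the open definable stratum $\Cal{I}_t(\lambda) \subset F_t\setminus\crit(\nu_t)$. The first observation that simplifies everything is that on $\Cal{I}_f(\lambda)$ the Gauss curvature has constant sign: since $\ud_x\nuf|_{T_xF_{f(x)}}$ has exactly $\lambda$ negative eigenvalues, $k_t(x)=(-1)^\lambda |k_t(x)|$ for every $x\in \Cal{I}_t(\lambda)$, hence
\begin{center}
$K(\lambda;t)=(-1)^\lambda |K|(\lambda;t)$.
\end{center}
This makes (4) immediate once (3) is known, and reduces the $K(\lambda;\cdot)$-half of (1) and (2) to the corresponding $|K|(\lambda;\cdot)$-statement.

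Next I would set up the $\lambda$-analog of the framework of Sections \ref{sectionCACH}--\ref{sectionHLGI}. By Proposition \ref{propGMTF1}, $\lambda_f$ is definable and locally constant on $\Rn\setminus(\crit(f)\cup\crit(\nuf))$, so $\Cal{I}_f(\lambda)$ is open and definable, and the restriction of $\Psi_f$ to it is still a $C^{l-1}$ local diffeomorphism with uniformly bounded fibers (Gabrielov). Setting
\begin{center}
$\Cal{U}_t^\lambda := \nu_t(\Cal{I}_t(\lambda))$ \quad and \quad $\Cal{U}_{k,t}^\lambda := \{u\in\Sr^{n-1}:\#(\nu_t|_{\Cal{I}_t(\lambda)})^{-1}(u)=k\}$,
\end{center}
the argument of Proposition \ref{propCACH1} applied to the restricted map yields $|K|(\lambda;t) = \sum_{k=1}^{N_f} k\,\vol_{n-1}(\Cal{U}_{k,t}^\lambda)$. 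Writing $\Scr{V}_k^{\lambda,*}$ for the one-sided Hausdorff limits of $\clos(\Cal{U}_{k,t}^\lambda)$ as $t\to c^*$, parts (1), (2), (3) then follow by transcribing Corollary \ref{corCCF1}, Proposition \ref{propCCF3}, and Theorem \ref{thmCCF1} word for word, the ingredients being the existence and finiteness of definable Hausdorff limits together with Proposition \ref{propHLGI2}.

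The only step that needs some justification is the $\lambda$-analog of Proposition \ref{propHLGI1}, which is the backbone of (2). Given $u$ in a connected component $U\subset \Cal{U}_c^\lambda$, every branch of $\Psi_f^{-1}(\{u\}\times\R)$ that meets $F_c$ at a point $x$ of index $\lambda$ stays inside $\Cal{I}_f(\lambda)$ on a neighborhood of $x$, by local constancy of $\lambda_f$. Therefore the strict-monotonicity-of-$f|_\Gamma$ argument of Proposition \ref{propHLGI1} produces an integer $l_*\geqslant s(i)$ with $U\cap\Scr{V}_c^{\lambda,*}\subset \Scr{V}_{l_*}^{\lambda,*}$ exactly as before. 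This is the one point where I would expect to spend care, but because the transverse structure induced by $f$ restricts cleanly to the open stratum $\Cal{I}_f(\lambda)$, the argument transfers without modification; the rest is routine rewriting.
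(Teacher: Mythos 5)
Your proposal is correct and follows the paper's own route: the paper proves the theorem precisely by restricting $\Psi_f$ to the open definable set $\Cal{I}_f(\lambda)$ and rerunning the arguments of Sections \ref{sectionHLGI} and \ref{sectionCCF} verbatim. Your additional observation that $k_t$ has constant sign $(-1)^\lambda$ on $\Cal{I}_t(\lambda)$ (all $n-1$ eigenvalues of $\ud_x \nuf|_{T_xF_t}$ are nonzero there, exactly $\lambda$ of them negative), so that $K(\lambda;t)=(-1)^\lambda|K|(\lambda;t)$, is a valid simplification not spelled out in the paper and makes part (4) immediate.
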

\begin{proof}
The proof of each point works as the proof of the similar statement given for the functions $t\to K(t)$
and $t\to |K|(t)$. The reason for that is to consider the function $\Psi_\lambda := \Psi_f|_{\Cal{I}_f
(\lambda)}$, and then do exactly the same work as that done in Section \ref{sectionHLGI} and
\ref{sectionCCF}
\end{proof}
%
%
%
%
%
%
%
%
%
%
%
%
%
\section{Remarks, comments and suggestions}\label{sectionRCS}

In the introduction we mentioned that the original motivation was to try to find some equisingularity
conditions on the family of levels $(F_t)$ that could be read through the continuity
of these total curvature functions. We were especially interested in the problem
caused by the regular values that are also bifurcation values.
This phenomenon is without any doubt caused by some curvature, in the broader sense of
Lipschitz-Killing curvature, accumulation at infinity (i.e. on the boundary of the domain).
Without any bound on the complexity of the singularity phenomenon occurring at infinity,
it would be very naive and wrong to hope that the continuity of these curvature functions we dealt with
is a sufficiently fine measure of the equisingularity of the levels $F_t$ nearby a regular value.
In some simple cases, see \cite{Gr}, they can provide sufficient conditions to ensure the equisingularity 
in a neighborhood of a given regular asymptotic critical value. But in whole generality we may also have 
to consider these higher order curvatures.

\medskip
Consider again the situation of Section \ref{sectionGMTF}: That is of $f : \Rn \mapsto \R$
a definable function enough differentiable. We have associated to each regular value $t$ of
$f$, two real numbers, namely $K(t)$ and $|K| (t)$.

\medskip
Let $t$ be a regular value of $f$ and let $x \in F_t$. Let $q \in \{1,\ldots,n-1\}$ be a given integer.
Let $N$ be a $q$-dimensional sub-vector space of $T_x F_t$.
Let us denotes $k_t (N,x)$ the Gauss curvature at $x$ of $F_t \cap (x + (\R \nu_t (x) \oplus N))$
a hypersurface of the $q+1$-dimensional affine subspace $x + (\R \nu_t (x) \oplus N)$.
The $q$-th Lipschitz-Killing curvature of $F_t$ at $x$ is
the real number 
\begin{center}
\medskip
$\displaystyle{LK_q (x) : = \int_{G(q,T_xF_t)} k_t (N,x) \ud N}$, 
\medskip
\end{center}
where $\ud N$ is the volume form of $G(q,T_xF_t)$ the Grassmann manifold of $q$-dimensional sub-vector 
spaces of $T_x F_t$. 

\medskip
So we would like to define two real numbers: 

\smallskip
\noindent
the total $q$-th Lipschitz-Killing curvature,

\begin{center}
\medskip
$\displaystyle{L_q (t) = c_{n,q} \int_{F_t} LK_q (x) \ud v_{n-1} (x)}$,
\medskip
\end{center}
\noindent
and the total absolute $q$-th Lipschitz-Killing curvature, also called total $q$-th length
\begin{center}
\medskip
$\displaystyle{|L|_q (t) = c_{n,q} \int_{F_t} |LK_q (x)| \ud v_{n-1} (x)}$,
\medskip
\end{center}
\noindent
where $c_ {n,q}$ is a universal constant depending only on $q$ and $n$.

\medskip
In general, once $q <n-1$, with the level $F_t$ non compact, the number $|L|_q (t)$ is likely to be
infinite !

\medskip
A more relevant question would be to estimate the first (and may be the second) dominant term of 
the asymptotic of
\begin{eqnarray*}
|L|_q (t;R) & = & c_{n,q} \displaystyle{\int_{F_t \cap \B_R^n}} |LK_q (x)| \ud v_{n-1} (x) \\
L_q (t;R) & = & c_{n,q} \displaystyle{\int_{F_t \cap \B_R^n}}LK_q (x) \ud v_{n-1} (x)
\end{eqnarray*}
as $R \fl + \infty$ and to see how these dominant terms varies in $t$ or in $(t,R)$.
But as already said, even in the subanalytic category where results from \cite{CLR, LR} provide 
some general information on the nature of such terms, it is likely to be a very difficult question !
%
%
%
%
%
%
%
%
%
%
%
%
%
%
%
%
%
%
%
%
%
%
%
%
%
%

\end{document}